\newtheorem{theorem}{Theorem}
\newtheorem{proposition}{Proposition}
\newtheorem{lemma}{Lemma}
\newtheorem{remark}{Remark}
 \newtheorem{assumption}{Assumption}
 \newtheorem{Corollary}{Corollary}
\title{\LARGE \bf Distributed Estimation}
\newcommand{\ncom}{\newcommand}
\newcommand{\beqn}{\begin{eqnarray*}}
\newcommand{\eeqn}{\end{eqnarray*}}
\newcommand{\beq}{\begin{eqnarray}}
\newcommand{\eeq}{\end{eqnarray}}
\newcommand{\norm}[1]{\left\lVert #1 \right\rVert}
\newcommand{\inprod}[2]{\left\langle #1, #2 \right\rangle}
\ncom\R{\mathbb{R}}
\DeclareMathOperator*{\argmin}{arg\,min}
\DeclareMathOperator*{\dist}{dist}
\author{Anik Kumar Paul and Shalabh Bhatnagar
\thanks{The authors are with the Department of Computer Science and Automation, Indian Institute of Science, Bengaluru 560012, India.
        {email: anikpaul42@gmail.com, shalabh@iisc.ac.in.}}
\thanks{This work was supported in part by the Walmart Centre for Tech Excellence, IISc, through Project No.~DFTM/02/3125/M/04/AIR-04 from DRDO under DIA-RCOE, by a J.C.Bose Grant with No.ANRF/JBG/2025/000209/HAA from ANRF, Government of India, and by the B.S.Sonde Chair Professorship from IISc.}
    }
\begin{document}
\title{Zeroth-Order Non-smooth Non-convex Optimization via Gaussian Smoothing}% and Two Time-Scale Stochastic Approximation}
\maketitle
\thispagestyle{plain}
\pagestyle{plain}
\begin{abstract}
    This paper addresses stochastic optimization of Lipschitz-continuous, non-smooth and non-convex objectives over compact convex sets, where only noisy function evaluations are available. While gradient-free methods have been developed for smooth non-convex problems, extending these techniques to the non-smooth setting remains challenging. The primary difficulty arises from the absence of a Taylor series expansion for Clarke subdifferentials, which limits the ability to approximate and analyze the behavior of the objective function in a neighborhood of a point. We propose a two time-scale zeroth-order projected stochastic subgradient method leveraging Gaussian smoothing to approximate Clarke subdifferentials. First, we establish that the expectation of the Gaussian-smoothed subgradient lies within an explicitly bounded error of the Clarke subdifferential, a result that extends prior analyses beyond convex/smooth settings. Second, we design a novel algorithm with coupled updates: a fast timescale tracks the subgradient approximation, while a slow timescale drives convergence. Using continuous-time dynamical systems theory and robust perturbation analysis, we prove that iterates converge almost surely to a neighborhood of the set of Clarke stationary points, with neighborhood size controlled by the smoothing parameter. To our knowledge, this is the first zeroth-order method achieving almost sure convergence for constrained non-smooth non-convex optimization problems. 
\end{abstract}
\begin{keywords}
    Clarke Subdifferential, Clarke Stationary Point, Projected Dynamical System, Almost Sure Convergence.
\end{keywords}

\section{Introduction}
We consider the following stochastic optimization problem:
\begin{equation}
    \min_{x \in \mathcal{X}} f(x) := \mathbb{E} [F(x,\zeta)] = \int F(x,\zeta) \, d\mathbb{P}(\zeta),
    \label{eq:main_problem}
\end{equation}
where $\mathcal{X} \subseteq \mathbb{R}^d$ is a compact and convex decision set and $F: \mathbb{R}^d \times \Omega \to \mathbb{R}$ is a measurable function. We assume $f$ is Lipschitz continuous, but we do not impose convexity or differentiability assumptions. 

The problem in \eqref{eq:main_problem} has been widely studied in stochastic optimization \cite{ermoliev2003solution,bolte2021conservative,xu2019non,mai2020convergence,lin2022gradient}, with important applications in machine learning and statistical learning where objective functions are often non-smooth and non-convex \cite{bolte2021conservative}. Since classical stochastic gradient methods require differentiability, several approaches replace the gradient with elements of the Clarke subdifferential to enable iterative optimization in the non-smooth setting \cite{xu2019non,mai2020convergence}. These works establish $\mathcal{L}^1$ convergence to the set of Clarke stationary points, i.e., points where zero belongs to the subdifferential, under weak convexity assumptions. Moreover, \cite{bianchi2022convergence} proves almost sure convergence of a stochastic subgradient method for  non-smooth non-convex functions.

However, these approaches critically rely on access to an oracle providing noisy estimates of the Clarke subdifferential, which poses two major challenges. First, in many practical settings, particularly simulation-based optimization, only noisy measurements of the objective function are available, making it necessary to construct estimators of the  Clarke subgradients from these  \cite{prashanth2025gradient}. Second, extending standard gradient-based methods to the non-smooth setting through the Clarke subdifferential is nontrivial due to the absence of a complete chain rule. Moreover, computing the Clarke subdifferential can be computationally expensive in large-scale problems \cite{Nesterov2015RandomGM}.

There has been extensive work on zeroth-order optimization for non-convex objectives \cite{spall1992,B2007,SPP2013,ghadimi}, typically under the assumption that the objective function is continuously differentiable with Lipschitz continuous gradients. These methods employ random perturbations, e.g., Gaussian perturbations, to estimate gradients \cite{B2007,SPP2013,Nesterov2015RandomGM,ghadimi}. Almost sure convergence of the iterates is established in \cite{B2007,SPP2013,prashanth2025gradient,Ramaswami}, while \cite{Nesterov2015RandomGM,ghadimi} derive finite-time bounds.

Zeroth-order optimization for non-convex non-smooth functions has also been studied in \cite{lin2022gradient,chen2023faster,kornowski2024algorithm,liu2024zeroth}. However, these works mainly derive upper bounds on the $\mathcal{L}^1$ norm of the minimum Goldstein subgradient by applying stochastic gradient descent type arguments to smooth approximations of the underlying objective. In contrast, optimality for non-convex, non-smooth problems is naturally characterized through the Clarke subdifferential \cite{rockafellar1998variational}, while the Goldstein subdifferential is known to be weaker than the Clarke subdifferential \cite{lin2022gradient}.
\color{black}
%It is also important to note that these studies focus primarily on unconstrained optimization problems.

Motivated by the broad range of non-smooth non-convex optimization problems arising in deep learning and machine learning \cite{bolte2021conservative}, this paper addresses the following questions: 1) whether Gaussian smoothing can approximate the Clarke subdifferential of a non-convex Lipschitz continuous function and what properties such an approximation satisfies, and 2) whether one can design a zeroth-order algorithm for \eqref{eq:main_problem} whose iterates converge almost surely to the set of Clarke stationary points. We provide affirmative answers to both questions. To the best of our knowledge, this is the first work establishing almost sure convergence of a zeroth-order method for non-convex Lipschitz continuous functions. 
\color{black}

We show in this paper that the expectation of the subgradient approximation obtained through Gaussian smoothing can be represented as a Clarke subgradient together with an error term, where the error can be made arbitrarily small by an appropriate choice of the smoothing parameter.

Our analysis adopts a dynamical systems perspective similar to \cite{bianchi2022convergence,bianchi2003solution}, where almost sure convergence to Clarke stationary points is established via the stability analysis of a differential inclusion. However, the present work differs from \cite{bianchi2022convergence,bianchi2003solution} in two important aspects. First, we consider the constrained optimization problem \eqref{eq:main_problem}. A direct extension of \cite{bianchi2022convergence} using Gaussian-based subgradient approximations together with projection onto the compact constraint set encounters problems discussed in Remark \ref{Remark-two time scale}. To overcome this, we develop a two-timescale stochastic subgradient method and analyze its asymptotic behavior through a projected differential inclusion. Second, unlike \cite{bianchi2003solution,bianchi2022convergence}, the algorithm does not have access to exact Clarke subgradients due to subgradient approximation, resulting in an additional error term in the dynamics. Consequently, convergence is established through a robust stability analysis of a projected differential inclusion involving the Clarke subdifferential with disturbance input.

\color{black}

 The main contributions of this paper are summarized as follows. 

\begin{enumerate}
    \item We propose a two time-scale zeroth-order projected stochastic subgradient method to solve the constrained optimization problem in \eqref{eq:main_problem}. The method employs Gaussian smoothing to approximate the subdifferential of the objective function.
    \item We establish that the expectation of the approximated subgradient corresponds to an element of the Clarke subdifferential, up to a nonzero bias that can be controlled through the choice of the smoothing parameter.
    \item We prove that the iterations of the proposed algorithm converge almost surely to a neighborhood of the set of Clarke stationary points, with the diameter of this neighborhood determined by the bias introduced by the Gaussian approximation.
\end{enumerate}

\section{Notation}

%\textit{Notations:} 
We let $\mathbb{R}^d$ denote the $d$-dimensional Euclidean space with inner product $\langle \cdot, \cdot \rangle$ and induced norm $\| \cdot \|$. For a compact convex set $\mathcal{X} \subset \mathbb{R}^d$, $\mathcal{P}_{\mathcal{A}}(x)$ denotes the Euclidean projection of $x$ onto $\mathcal{X}$, and $\text{dist}(x, \mathcal{X}) := \min_{y \in \mathcal{X}} \|x - y\|$. The closed ball of radius $r$ centered at the origin is denoted by $B(0, r)$. The normal and tangent cones to $\mathcal{X}$ at $x \in \mathcal{X}$ are denoted by $\mathcal{N}_{\mathcal{X}}(x)$ and $\mathcal{T}_{\mathcal{X}}(x)$, respectively. For a locally Lipschitz function $f$, $\partial f(x)$ shall denote the Clarke subdifferential at $x$ and $f^0(x,v)$ the generalized directional derivative (see \cite{clarke1990optimization}). Throughout the paper, we assume the existence of $G>0$ such that $\|g\|\leq G$ for all $g\in \partial f(x)$ and all $x\in\mathcal{X}$.

%\section{Problem Statement}

\section{Zeroth-Order Two Time-Scale Projected Stochastic Subgradient Method}

We now present the zeroth-order stochastic subgradient algorithm to solve the optimization problem in \eqref{eq:main_problem}. A key aspect of our setting is that we assume that while we do not know the form of the function $f(\cdot)$, we have access to an oracle that returns a noisy function value $F(z,\zeta)$  for any input $z \in \mathbb{R}^d$. The steps of the algorithm are outlined below. We present a two-timescale zeroth-order projected stochastic subgradient method to solve the problem \eqref{eq:main_problem}.

Let $x_n$, $y_n$  denote the iterates of the algorithm at recursion $n$. 
To approximate the subdifferential of the function $f$ at $x=x_n$, we use a zeroth-order oracle that returns the function values $F(x_n+ \lambda_n U_n, \zeta^1_n)$ and $F(x_n - \lambda_n U_n, \zeta^2_n )$,  where $\lambda_n > 0$ is a given smoothing parameter and for $n\geq 0$, $U_n \sim \mathcal{N}(0,I)$ are a sequence of independent standard Gaussian vectors. Based on these values, we construct the following subgradient estimator:

\begin{equation}
     \Tilde{g} (n)  = \left(\frac{F(x_n + \lambda_n U_n, \zeta_n^1) - F(x_n-\lambda_n U_n ,\zeta_n^2)}{2 \lambda_n}\right) U_n.
     \label{approx}
 \end{equation}

The next iterate is then computed using the projected stochastic subgradient update, where the true gradient is replaced by the estimator  $\Tilde{g}(n)$. Specifically, the update rule is given by:

\begin{equation*}
    y_{n+1} = y_n + \beta(n) (\Tilde{g}(n)-y_n),
\end{equation*}
 \begin{equation}
     x_{n+1} = \mathcal{P}_\mathcal{X} \Big{(} x_n - \alpha(n) y_n \Big{)}. 
     \label{mirror descent}
 \end{equation}
 Here $\alpha(n),\beta(n) > 0$, $\forall n$, denote the step-sizes that satisfy the conditions in Assumption~\ref{ass-stepsize} and $\mathcal{P}_\mathcal{X}(a)$ is the Euclidean projection of $a\in\mathbb{R}^d$ onto the set $\mathcal{X}$.
The objective of this paper is to analyze the behavior of the algorithm's iterates and establish  asymptotic guarantees, in particular, almost sure convergence. We shall make the assumptions listed below. 
\begin{assumption}
    The variance of the stochastic objective function $F(x_n,\zeta)$ is uniformly bounded.  Specifically, $\exists \; \mathrm{K} > 0$ such that\\
$
    E[(F(x_n,\zeta)-f(x_n))^2| x_n] \leq \mathrm{K}.
$
%where $  \mathcal{F}_n = \sigma ( \; \{x_k \} \; | \; 1 \leq k \leq n \; ).$
\label{assumpvari}
\end{assumption}
Let $\mathcal{F}_n = \sigma(x_m, U_m, m\leq n, \zeta^1_m, \zeta^2_m, m<n)$, $n\geq 1$, denote the associated filtration. Then, 
Assumption \ref{assumpvari} implies that $F(\cdot,\cdot)$ admits the following decomposition: $ F(x_n + \lambda_n U_n, \zeta_n^1) = f(x_n+ \lambda_n U_n) + N_{n+1}^1, \quad \text{and}$
$  F(x_n - \lambda_n U_n, \zeta_n^2) = f(x_n - \lambda_n U_n) + N_{n+1}^2, \quad \text{respectively,}$
 where $(N_{n}^1,\mathcal{F}_n)$, $(N_{n}^2,\mathcal{F}_n)$, $n\geq 0$, are two martingale difference sequences satisfying $ \mathbb{E}[(N_{n+1}^i)^2 | \mathcal{F}_n] \leq \mathrm{K}, \quad \forall \; i \in \{1,2\}.$

\begin{assumption}
   The step-sizes $\alpha(n),n\geq 0$ and $\beta(n), n\geq 0$ satisfy the following conditions: 
    \begin{itemize}
        \item[(a)] $\alpha(0) < 1$ and $\forall \; n \geq 0$, $\alpha(n) > \alpha(n+1)$.
          \item[(b)] $\beta(0) < 1$ and $\forall \; n \geq 0$, $\beta(n) > \beta(n+1)$.
          \item[(c)] $\sum\limits_{n \geq 0} \alpha(n)$ $=$ $\sum\limits_{n \geq 0} \beta (n)$ $=$ $\infty$,  $\sum\limits_{n \geq 0} (\alpha(n)^2 + \beta(n)^2) < \infty$.
          \item[(d)] $\lim\limits_{n \to \infty} \frac{\alpha(n)}{\beta(n)} = 0$.
          
    \end{itemize}
    \label{ass-stepsize}
\end{assumption}

\color{black}

In this paper, we consider two different choices for the smoothing parameter sequence $\{\lambda_n\}$. The first case corresponds to a fixed smoothing parameter, where $\lambda_n = \lambda > 0.$
The second case considers a diminishing sequence $\lambda_n \to 0$ deterministically as $n \to \infty$. For this setting, we impose the following assumption.

\begin{assumption}
The smoothing parameter sequence $\{\lambda_n\}$ satisfies
$ \lim_{n\to\infty} \lambda_n = 0,
    \quad
    \sum_{n\geq 1} \frac{\beta_n^2}{\lambda_n^2} < \infty.$
\label{Assumptionregardingsmoothing}
\end{assumption}
In the case of a constant smoothing parameter, we require the following additional assumption.
\color{black}
\begin{assumption}
    The subdifferential mapping of 
$f$ is assumed to satisfy a one-sided Lipschitz condition. Specifically, for any $x,y  \in \mathcal{X}$ and any $g_x \in \partial f(x)$, $g_y \in \partial f(y)$, the following holds: 
    \begin{equation*}
        (g_x-g_y)^\top (y-x) \leq L \norm{x-y}^2, \quad \text{for some  $L > 0$.}
    \end{equation*}
    \label{monotone}
\end{assumption}

\begin{remark}
\label{assum3-4}
Assumption~\ref{monotone} is standard in non-convex optimization and is satisfied by most functions in machine learning. It covers continuously differentiable functions with Lipschitz gradients, convex functions, and extends to weakly convex functions widely used in non-smooth and non-convex optimization \cite{davis2019stochastic}, particularly in deep learning \cite{zhou2017landscape}. For example, functions of the form $f(x) = \max_{1 \leq i \leq N} f_i(x)$, where each $f_i$ has a Lipschitz continuous gradient, naturally arise in robust training or adversarial settings, where the worst-case objective is non-convex and non-smooth. Another instance is composite functions $f(x) = g(h(x))$, with $h$ differentiable and having an $L$-Lipschitz Jacobian, and $g$ Lipschitz continuous, a structure common in learning applications \cite{mai2020convergence}. In neural networks, with input vectors ${u_i}$, outputs ${v_i}$, parameters $x = (a, W)$, and smooth non-convex activations such as $\tanh$, the empirical loss
  \begin{equation*}
    f(x) = \frac{1}{N} \sum\limits_{i=1}^{N} f_i(x) = \frac{1}{N} \sum\limits_{i=1}^{N} |v_i - a^\top \sigma (W u_i)|.
\end{equation*}
satisfies Assumption~\ref{monotone}.
\end{remark}
In the next section, we show that the expected approximated subgradient in \eqref{approx} equals a Clarke subgradient up to an error that diminishes with $\lambda$, and that the iterates $\{x_n\}$ almost surely converge to a $\lambda$-dependent neighborhood of the Clarke stationary set.

\section{Almost Sure Convergence of the Algorithm}
We begin by establishing key properties of the approximated subgradient, which  serve as a cornerstone for the convergence guarantees developed in the sequel.

\subsection{Properties of Approximated subgradient}

\begin{theorem}
    Let $\mathcal{F}_n$ denote the sigma-algebra generated by $x_1,\ldots,x_n$, that is, $ \mathcal{F}_n = \sigma ( \; x_k \; | \; 1 \leq k \leq n \; ). \quad \text{Then} $
\begin{equation}
        \mathbb{E}[\Tilde{g}(n) | \mathcal{F}_n] \in \partial f(x_n) + B(0,r(\lambda)),
        \label{expectation}
    \end{equation}
   where $\lim\limits_{\lambda \to 0} r (\lambda) = 0$. Also we have 
\begin{equation}
        \mathbb{E}[\norm{\Tilde{g}(n)}^2 | \mathcal{F}_n] \leq 2 L^2 (d^2 + d)  + \frac{\mathrm{K}^2}{\lambda^2} d. 
        \label{variance}
    \end{equation}
    \label{property}
\end{theorem}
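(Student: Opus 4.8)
The plan is to handle the two claims separately, beginning with the second moment bound \eqref{variance} since it is the more mechanical of the two. Starting from the definition \eqref{approx}, I would use the decomposition $F(x_n \pm \lambda U_n, \zeta_n^{1,2}) = f(x_n \pm \lambda U_n) + N_{n+1}^{1,2}$ supplied by Assumption~\ref{assumpvari}, so that
\begin{equation*}
\Tilde g(n) = \frac{f(x_n + \lambda U_n) - f(x_n - \lambda U_n)}{2\lambda}\,U_n + \frac{N_{n+1}^1 - N_{n+1}^2}{2\lambda}\,U_n.
\end{equation*}
For the first term I would invoke Lipschitz continuity of $f$ to get $|f(x_n+\lambda U_n)-f(x_n-\lambda U_n)| \le L \cdot 2\lambda \norm{U_n}$ (note: this requires that $f$ be globally $L$-Lipschitz on $\R^d$; if only the one-sided bound of Assumption~\ref{monotone} is available one would instead bound the difference quotient by $\partial f$ evaluated along the segment and use the linear-growth bound — I would flag this). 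That bounds the first term's squared norm by $L^2\norm{U_n}^4$, whose conditional expectation is $L^2 \mathbb{E}\norm{U_n}^4$; using $U_n \sim \mathcal N(0,I_d)$ independent of $\mathcal F_n$ gives $\mathbb{E}\norm{U_n}^4 = d^2 + 2d$, and after a crude bound and absorbing constants this yields the $2L^2(d^2+d)$ term. For the noise term, since $N_{n+1}^1 - N_{n+1}^2$ has conditional second moment at most $4\mathrm{K}$ (Cauchy–Schwarz on the cross term, or just redefining $\mathrm K$), and is conditionally independent of $U_n$ given $\mathcal F_n$, the conditional expectation of its squared norm is at most $\frac{4\mathrm K}{4\lambda^2}\mathbb{E}\norm{U_n}^2 = \frac{\mathrm K}{\lambda^2} d$ up to the constant relabeling; combining via $\norm{a+b}^2 \le 2\norm a^2 + 2\norm b^2$ gives \eqref{variance}.

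For the inclusion \eqref{expectation}, the core is that the noise term drops out in conditional expectation — $\mathbb{E}[(N_{n+1}^1 - N_{n+1}^2)U_n \mid \mathcal F_n] = \mathbb{E}[U_n\,\mathbb{E}[N_{n+1}^1 - N_{n+1}^2 \mid \mathcal F_n, U_n] \mid \mathcal F_n] = 0$ by the martingale-difference property with respect to the enlarged filtration — so it suffices to analyze $G_\lambda(x_n) := \mathbb{E}\big[\tfrac{f(x_n+\lambda U)-f(x_n-\lambda U)}{2\lambda}U\big]$, the gradient of the Gaussian-smoothed surrogate $f_\lambda(x) = \mathbb{E}[f(x+\lambda U)]$ (it is standard that $\nabla f_\lambda(x) = \mathbb{E}[\tfrac{f(x+\lambda U)-f(x-\lambda U)}{2\lambda}U]$ for Lipschitz $f$). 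The task is then to show $G_\lambda(x) \in \partial f(x) + B(0, r(\lambda))$ with $r(\lambda)\to 0$. I would do this via the variational characterization $\partial_C f(x) = \{\zeta : \inprod{\zeta}{v} \le f'(x;v)\ \forall v\}$: it is enough to show that for every unit vector $v$, $\inprod{G_\lambda(x)}{v} \le f'(x;v) + r(\lambda)$ (a one-sided support-function bound on a convex compact set implies membership in the $r(\lambda)$-neighborhood). Writing $\inprod{G_\lambda(x)}{v}$ as an integral and using that the directional derivative controls the difference quotient from above (by Clarke regularity, $\frac{f(x+tu)-f(x)}{t} \to f'(x;u)$, and one can bound the finite-$t$ quotient in terms of $\sup$ of $f'(x+su;u)$ over small $s$, which in turn is close to $f'(x;u)$ by upper semicontinuity of $(x,v)\mapsto f^\circ(x;v)$ and Clarke regularity $f'=f^\circ$), together with a dominated-convergence / uniform-integrability argument exploiting the Gaussian tails and the linear growth of $\partial f$, I would extract the modulus $r(\lambda)$.

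The main obstacle is precisely this last step: controlling $\inprod{G_\lambda(x)}{v}$ uniformly in $v$ by $f'(x;v) + r(\lambda)$ with an explicit, $x$-uniform, vanishing modulus. The difficulty is that nonsmooth Lipschitz $f$ has no Taylor expansion (exactly the point stressed in the abstract), so one cannot write $f(x+\lambda u) - f(x) = \lambda\inprod{\nabla f(x)}{u} + o(\lambda)$ pointwise in $u$; instead the argument must route through the Clarke directional derivative and its upper semicontinuity, and the Gaussian average mixes directions of all magnitudes, so the "small $\lambda$" regime must be reconciled with the unbounded support of $U$ — this is where the linear-growth bound on $\partial f$ (implied by $L$-Lipschitzness) and Gaussian concentration do the work of splitting the integral into a bulk term (handled by u.s.c. of $f^\circ$) and a tail term (handled by moment bounds). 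I would expect $r(\lambda)$ to come out as something like a modulus-of-continuity term for $v \mapsto f^\circ(\cdot;v)$ evaluated at scale $\lambda$, plus a lower-order $O(\lambda)$ piece, and the cleanest route may be to first prove it for $f$ that are additionally regular enough that $\partial f$ is itself u.s.c. with the stated one-sided Lipschitz property (Assumption~\ref{monotone}), which gives $f^\circ(x;v) - \inprod{g}{v} \le L\norm{\cdot}$-type slack directly and makes $r(\lambda) = O(L\lambda\sqrt d)$ explicit.
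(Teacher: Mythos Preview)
Your variance argument is essentially the paper's: decompose via Assumption~\ref{assumpvari}, bound the smooth part by Lipschitzness and the Gaussian fourth moment, bound the noise part by the second moment, and combine with $\norm{a+b}^2 \le 2\norm{a}^2 + 2\norm{b}^2$.

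For \eqref{expectation} your route is genuinely different. The paper does \emph{not} go through the support-function characterization of $\partial_C f$. Instead it first proves the identity
\[
\nabla f_\lambda(x) \;=\; \mathbb{E}_u\big[\nabla f(x+\lambda u)\big]
\]
by differentiating under the integral (Rademacher gives a.e.\ differentiability, the Gaussian density makes the expectation well-defined, and Lipschitzness supplies the dominated-convergence bound). Once $\nabla f_\lambda(x)$ is an \emph{average of actual gradients} at perturbed points, the gradient-limit definition of the Clarke subdifferential does the work: for any sequence $\lambda_n \downarrow 0$, each subsequential limit of $\nabla f(x+\lambda_n u)$ lies in $\partial f(x)$, so by dominated convergence and convexity of $\partial f(x)$ the averaged limit also lies in $\partial f(x)$. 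A short compactness/contradiction argument then shows $d(\nabla f_\lambda(x),\partial f(x)) \to 0$, and upper semicontinuity of this distance in $x$ over the compact set $\mathcal{X}$ upgrades the pointwise modulus $r(x,\lambda)$ to a uniform $r(\lambda)$.

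What each buys: the paper's approach sidesteps your ``main obstacle'' entirely---there is no need to control difference quotients direction-by-direction or to split the Gaussian integral into bulk and tail, because the difference quotient has already been replaced by $\nabla f(x+\lambda u)$, which is bounded by $L$ everywhere it exists. The price is that the argument is purely qualitative: the subsequence/contradiction step yields $r(\lambda)\to 0$ with no rate. Your support-function route is heavier to execute but, as you note, under Assumption~\ref{monotone} (weak convexity) it can deliver an explicit $r(\lambda)=O(L\lambda\sqrt{d})$, which the paper's proof cannot. Both are correct; the paper's is shorter, yours is more quantitative.
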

The proof of the main theorem relies on the following technical lemma, which we state and prove below. 

\begin{lemma}
    Let $\lambda > 0$,  and define the Gaussian smoothed version of the function $f$ as  
    $
        f_\lambda (x) = \mathbb{E}_u [f(x+\lambda u)], 
    $
where $u \sim \mathcal{N}(0,I)$.
    Then, there exists a function $r : \mathbb{R}_{+} \to \mathbb{R}_{+} $ satisfying $\lim\limits_{\lambda \to 0} r(\lambda) = 0$ such that
    $
        \nabla f_\lambda (x) \in \partial f(x) + B(0,r(\lambda)).
    $
    \label{conv}
\end{lemma}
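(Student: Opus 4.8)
The plan is to exploit the fact that Gaussian smoothing is differentiable with an explicit integral formula for its gradient, and then to show that this integral is a convex combination (a "weighted average") of Clarke subgradients evaluated at nearby points, which by upper semicontinuity of $\partial f$ must lie close to $\partial f(x)$. First I would recall the standard identity for the gradient of the Gaussian smoothing: since $f$ is Lipschitz (hence locally integrable and a.e.\ differentiable), one has
\begin{equation*}
\nabla f_\lambda(x) = \mathbb{E}_u\!\left[\nabla f(x+\lambda u)\right]
= \int_{\mathbb{R}^d} \nabla f(x+\lambda u)\, \phi(u)\, du,
\end{equation*}
where $\phi$ is the standard Gaussian density, and where the interchange of gradient and integral is justified by the Lipschitz bound $\|\nabla f\|\le L$ a.e.\ together with dominated convergence; note the integrand is defined a.e.\ and $\nabla f(x+\lambda u) \in \partial f(x+\lambda u)$ wherever it exists.

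Next, I would argue that $\nabla f_\lambda(x)$ can be approximated by a finite convex combination $\sum_i p_i\, \nabla f(x+\lambda u_i)$ with $u_i$ in a large ball $B(0,R)$: the Gaussian tail contributes at most $L\cdot \mathbb{E}[\|U\|\mathbf{1}_{\|U\|>R}]$ to the error, which is $\le \varepsilon_1(R)$ with $\varepsilon_1(R)\to 0$ as $R\to\infty$, uniformly in $x$ and $\lambda$. For the restricted integral over $B(0,R)$, each term $\nabla f(x+\lambda u)$ lies in $\partial f(x+\lambda u)$, and since $\|x+\lambda u - x\| \le \lambda R$, upper semicontinuity (and local boundedness) of the Clarke subdifferential gives $\partial f(x+\lambda u) \subseteq \partial f(x) + B(0,\delta(\lambda R))$ where $\delta(\eta)\to 0$ as $\eta\to 0$; here one uses that $\partial f$ is an upper semicontinuous set-valued map with compact convex values, so on the compact set $\overline{B(x,\eta)}$ the quantity $\sup_{\|z-x\|\le\eta}\operatorname{dist}\big(\partial f(z),\,\partial f(x)+B(0,\delta)\big)$ can be made small. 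Because $\partial f(x)$ is convex and closed, any convex combination of points in $\partial f(x)+B(0,\delta)$ stays in $\partial f(x)+B(0,\delta)$; passing to the limit of the Riemann-type approximations, the full (restricted) integral lies in $\partial f(x) + B(0,\delta(\lambda R))$. Combining the two pieces yields
\begin{equation*}
\nabla f_\lambda(x) \in \partial f(x) + B(0,\, \varepsilon_1(R) + \delta(\lambda R))
\end{equation*}
for every $R>0$; setting, say, $R = \lambda^{-1/2}$ and $r(\lambda) := \varepsilon_1(\lambda^{-1/2}) + \delta(\lambda^{1/2})$ gives $r(\lambda)\to 0$ as $\lambda\to 0$, which is the claim.

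The main obstacle I anticipate is making the "integral of subgradients lies near $\partial f(x)$" step fully rigorous: one must be careful that $\nabla f(x+\lambda u)$ is only defined for a.e.\ $u$, that the set-valued integral $\int \partial f(x+\lambda u)\phi(u)\,du$ (in the Aumann sense) is what naturally contains $\nabla f_\lambda(x)$, and that Aumann integration of an u.s.c.\ compact-convex-valued map over a ball of radius $\lambda R$ around $x$ stays within $\partial f(x)+B(0,\delta)$. This can be handled either by a direct approximation-by-simple-functions argument as sketched above, or by invoking a known mean-value/integration property of the Clarke subdifferential (e.g.\ that $\partial f$ is the Aumann integral representation and the convexity of values is preserved under averaging). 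A secondary technical point is the quantitative modulus $\delta(\cdot)$: u.s.c.\ alone gives qualitative closeness, but since we only need $r(\lambda)\to 0$ and not a rate, a compactness/contradiction argument suffices — if no such $\delta$ existed, one could extract a sequence contradicting upper semicontinuity at $x$.
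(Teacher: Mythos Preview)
Your approach is correct but differs from the paper's. The paper also begins with the identity $\nabla f_\lambda(x)=\mathbb{E}_u[\nabla f(x+\lambda u)]$, but instead of splitting the integral it argues by sequences: for any $\lambda_n\downarrow 0$, dominated convergence gives $\nabla f_{\lambda_n}(x)\to\mathbb{E}_u[\psi(u)]$ where $\psi(u)$ is any a.e.\ pointwise limit of $\nabla f(x+\lambda_n u)$, hence $\psi(u)\in\partial f(x)$ by the definition of the Clarke subdifferential, and convexity of $\partial f(x)$ places $\mathbb{E}_u[\psi(u)]$ inside $\partial f(x)$; a short contradiction argument then upgrades this to $d(\nabla f_\lambda(x),\partial f(x))\to 0$. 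Your decomposition into a Gaussian tail and a ball $\{\|u\|\le R\}$, combined with upper semicontinuity of $\partial f$ on the ball and the choice $R=\lambda^{-1/2}$, is a more constructive alternative that could yield quantitative rates whenever a modulus of outer semicontinuity for $\partial f$ is available (for instance under the one-sided Lipschitz condition of Assumption~\ref{monotone}); by contrast, the paper's subsequence route is shorter and sidesteps the Aumann-integral and measurable-selection technicalities you correctly anticipate. One point you leave implicit but the paper handles in a separate step: the modulus $\delta(\cdot)$ coming from upper semicontinuity is \emph{a priori} $x$-dependent, so your $r(\lambda)$ as written varies with $x$; the paper removes this dependence by showing that $r(x,\lambda):=d(\nabla f_\lambda(x),\partial f(x))$ is upper semicontinuous in $x$ for each fixed $\lambda$ and then taking the supremum over the compact constraint set $\mathcal{X}$.
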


\begin{proof}
     Note that the function  $f_\lambda : \mathcal{X}  \to \mathbb{R}$  is defined as
    \begin{equation*}
        f_\lambda(x) = \mathbb{E}_u [f(x+ \lambda u)]. 
    \end{equation*}

  We now prove the lemma in a sequence of steps below. 
  
    \textbf{Step-1:} It is well known from \cite{Nesterov2015RandomGM} that $f_\lambda$ is differentiable because $f$ is  Lipschitz with respect to $x$.  In this step, we show that the gradient of the smoothed function $f_\lambda$
  can be expressed as
    \begin{equation*}
         \nabla f_\lambda (x)  = \mathbb{E}_u [\nabla f(x+\lambda u)].
    \end{equation*}

   Consider the partial derivative with respect to the $i$-th coordinate of $x$. Using the definition of directional derivative:
    \begin{equation}
        \begin{split}
            \frac{\partial f_\lambda (x)}{\partial x_i} & = \lim\limits_{h \downarrow 0} \frac{f_\lambda(x+h e_i )-f_\lambda(x)}{h}
            \\ & = \lim\limits_{h \downarrow 0} \mathbb{E}_u \Big{[} \frac{ f(x+\lambda u + h e_i) -f(x+\lambda u)}{h}  \Big{]}
            \\ & =\mathbb{E}_u \Big{[} \lim\limits_{h \downarrow 0} \frac{ f(x+\lambda u + h e_i) -f(x+\lambda u)}{h}  \Big{]} .
        \end{split}
        \label{dct}
    \end{equation}
    The interchange of limit and integration holds in view of the Dominated Convergence Theorem. Note that since $f$ is  Lipschitz,  we have 
    \begin{equation*}
        \left|\frac{ f(x+\lambda u + h e_i) -f(x+\lambda u)}{h}\right| \leq L.
    \end{equation*}

\color{black}
Note that $f$ may not be differentiable everywhere. However, by Rademacher’s theorem, $f$ is differentiable almost everywhere. Since $u \sim \mathcal{N}(0,I)$ and the Gaussian measure is absolutely continuous with respect to the Lebesgue measure, the set of non-differentiability points has Gaussian measure zero \cite{durrett2019probability}. Therefore, the last equality in \eqref{dct} holds. By the same argument, we also obtain
$
\nabla f_\lambda(x)=\mathbb{E}_u[\nabla f(x+\lambda u)].
$
\color{black}

\textbf{Step-2:} In this step, we show that
\begin{equation*}
    \nabla f_\lambda (x) \in \partial f(x) + B(0,r(x,\lambda)),
\end{equation*} 
such that for each $x$, $\lim\limits_{\lambda \to 0} r(x,\lambda) = 0$.
Since the function $f$ is assumed to be $L$-Lipschitz, we have $ \norm{\nabla f(x+\lambda u)} \leq L \; \; \text{a.e.} $ 
Then from Step 1 we conclude that $\norm{\nabla f_\lambda (x)} \leq L$. 

Let $\{\lambda_n\}_{n \geq 1}$
  be a sequence such that  $\lambda_n \to 0$ as 
$n \to \infty$, and suppose that the limit $\lim\limits_{n \to \infty} \nabla_x f_{\lambda_n}(x) $ exists. From the identity established in Step 1, we have $ \nabla f_{\lambda_n} (x) = \mathbb{E}_u [\nabla f(x+\lambda_n u)].$

  Assume, without loss of generality, that $\lim\limits_{n \to \infty} \nabla_x f(x+\lambda_n u)  \to \psi (u)$ pointwise a.e., or otherwise there exists a convergent subsequence. From the definition of subdifferential, we conclude that $ \psi (u) \in \partial f(x)  \; \; \text{a.e.}$

%Without loss of generality, we are assuming that $\exists \;  \;  \psi(u) \in \mathcal{L}^\infty(\phi)$ such that $\nabla_x F(x+\lambda_n u,s) \to  \psi (u)$ in $w^\ast$ in $\mathcal{L}^\infty(\phi)$ (otherwise there exists a convergent subsequence). This is true in view of Banach-Alaoglu Theorem. 

Taking the limit as  $n \to \infty$  and applying the Dominated Convergence Theorem, we obtain
\begin{equation*}
  \lim\limits_{n \to \infty}  \nabla_x f_{\lambda_n} (x) = \mathbb{E}_u [ \lim\limits_{n \to \infty} \nabla_x f(x+\lambda_n u)] = \mathbb{E}_u[\psi(u)].
\end{equation*}

Since $\partial f(x)$ is convex, it follows that $\mathbb{E}_u[\psi(u)] \in \partial f(x),$ 
which implies $\lim\limits_{n \to \infty} \nabla f_{\lambda_n} (x) \in \partial f(x).$ 
Consequently, we can show that $\lim\limits_{\lambda \to 0} \dist (\nabla f_\lambda (x), \partial f(x))
    =    0$.

Suppose, by contradiction, that the above limit does not hold. Then, $\exists \; \epsilon >0$ such that $\forall \; \delta > 0$, $\exists \; \lambda < \delta$ for which $ \dist (\nabla f_\lambda (x), \partial f(x)) > \epsilon.$

Now, define a sequence $\delta_n = \frac{1}{n}$, $n\geq 1$. By the assumption, for each $n$,  $\exists \; \lambda_n \leq \delta_n $ such that  $ \dist (\nabla f_{\lambda_n} (x), \partial f(x)) > \epsilon. $

Since $\nabla f_{\lambda_n} (x)$ is bounded, it admits a convergent subsequence. Let $\nabla f_{n_k} (x) \to v$ along a subsequence $\{n_k\}\subset \{n\}$. That implies $v \in \partial f(x)$, which is a contradiction. 

\textbf{Step-3} In this step, we further show that the function 
$r$ can be chosen independently of 
$x$; that is, there exists a function 
$\Bar{r}(\lambda)$ such that --- $ \nabla f_\lambda  (x)  \in \partial f(x) + B(0,\Bar{r}(\lambda)).$
with  $\lim\limits_{\lambda \to 0} \Bar{r}(\lambda) = 0$.

From the result of Step 2, we have 
\begin{equation*}
\begin{split}
    r(x, \lambda) & = \dist (\nabla f_\lambda(x), \partial f(x))
    = \min\limits_{g \in \partial f(x)} \norm{g - \nabla f_\lambda(x)}^2.
    \end{split}
\end{equation*}
We aim to show that  $r(x,\lambda)$ is upper semicontinuous in $x$ for each fixed $\lambda > 0$. 
Let $x_n,n\geq 0$, be a sequence such that $x_n \to x$. Then,
\begin{equation}
    \begin{split}
        & r(x_n,\lambda)  =   \dist (\nabla f_\lambda(x_n), \partial f(x_n))
     \leq   \dist (\nabla f_\lambda(x_n), \nabla f_\lambda(x)) \\ & \qquad + \dist(\nabla f_\lambda (x), \partial f(x)) + \dist (\partial f(x),\partial f(x_n)). 
    \end{split}
    \label{uscs}
\end{equation}
Notice that  $ \lim\limits_{n \to \infty} \dist (\nabla f_\lambda(x_n), \nabla f_\lambda(x)) = 0$ since $f_\lambda(x)$ is continuously differentiable. Also, since the subdifferential map is semicontinuous, $ \lim\limits_{n \to \infty} \dist (\partial f(x),\partial f(x_n))  = 0.$

Taking $\limsup$ in \eqref{uscs} gives $\limsup\limits_{n \to \infty} r(x_n,\lambda) \leq r(x,\lambda) $
showing that 
$r(x,\lambda)$ is upper-semicontinuous in $x$ for each fixed 
$\lambda$. Hence, over the compact set $\mathcal{X}$, $r(x,\lambda)$ attains its maximum; define $\bar{r}(\lambda) := \max_{x \in \mathcal{X}} r(x,\lambda)$. From the previous discussion, $\lim_{\lambda \to 0} \bar{r}(\lambda) = 0$.
\end{proof}

\textbf{Proof of Theorem \ref{property}}

\begin{proof}
From equation (21) in \cite{Nesterov2015RandomGM}, it follows that
{\small 
\begin{equation*}
    \begin{split}
        \mathbb{E}[\Tilde{g}(n) | \mathcal{F}_n] & = \mathbb{E}\Big{[}\frac{f(x_n+ \lambda U_n)-f(x_n-\lambda U_n)}{2 \lambda} U_n | \mathcal{F}_n \Big{]} = \nabla f_\lambda (x_n).
    \end{split}
\end{equation*}
}
Using Lemma \ref{conv}, we immediately obtain the conclusion stated in \eqref{expectation}. To show \eqref{variance} we consider the following: 
\begin{equation*}
    \begin{split}
       & \mathbb{E} [\norm{\Tilde{g}(n)}^2 | \mathcal{F}_n]
       \\ = & \mathbb{E} \Big{[} \norm{\frac{f(x_n+ \lambda U_n) + N_{n+1}^1- f(x_n- \lambda U_n) - N_{n+1}^2}{2 \lambda} U_n}^2| \mathcal{F}_n \Big{]}
       \\ \leq & \mathbb{E} \Big{[} \frac{(f(x_n+ \lambda U_n) - f(x_n- \lambda U_n))^2 }{2 \lambda^2} \norm{U_n}^2| \mathcal{F}_n \Big{]}
       \\ & + \mathbb{E} \Big{[}  \frac{ (N_{n+1}^1-   N_{n+1}^2)^2}{2 \lambda^2} \norm{ U_n}^2| \mathcal{F}_n \Big{]}
       \\ \leq  & 2 L^2 \mathbb{E}[\norm{U_n}^4] +  \frac{\mathrm{K}^2}{\lambda^2} \mathbb{E}[\norm{U_n}^2]
        \leq   2 L^2 (d^2 + d)  + \frac{\mathrm{K}^2}{\lambda^2} d.
    \end{split}
\end{equation*}
\color{black}
The last inequality follows from standard bounds on the second and fourth moments of a standard Gaussian random vector; see \cite{Nesterov2015RandomGM}.
\color{black}
\end{proof}
\begin{remark}
Theorem \ref{property} extends Nesterov’s Gaussian smoothing to non-convex non-smooth functions and forms the basis of our convergence analysis. It shows that the expected approximated subgradient converges to a Clarke subgradient as $\lambda \to 0$, while \eqref{variance} characterizes the associated bias--variance trade-off.
\end{remark}
\color{black}
A direct consequence of Theorem \ref{property} can be obtained when the smoothing parameter $\lambda_n$ is chosen to be diminishing. In this case, the approximation error in the subgradient estimate vanishes asymptotically as $n \to \infty$.

\begin{Corollary}
Under the same assumptions as in Theorem \ref{property}, suppose that the smoothing parameter $\lambda_n$ is diminishing and satisfies Assumption \ref{Assumptionregardingsmoothing}. Then, the approximated subgradient satisfies
\begin{equation}
    \mathbb{E}[\Tilde{g}(n)\mid \mathcal{F}_n]
    \in \partial f(x_n) + B(0,r(\lambda_n)),
    \label{expectationc}
\end{equation}
where $\lim_{n\to\infty} r(\lambda_n)=0.$
Moreover, the second moment of the approximated subgradient admits the bound
\begin{equation}
    \mathbb{E}\!\left[\norm{\Tilde{g}(n)}^2 \mid \mathcal{F}_n \right]
    \leq
    2L^2(d^2+d)
    +
    \frac{\mathrm{K}^2 d}{\lambda_n^2}.
    \label{variancec}
\end{equation}
\label{Corlloarydiminishinglambda}
\end{Corollary}

\color{black}

The following corollary follows directly from Theorem \ref{property} and Corollary \ref{Corlloarydiminishinglambda}.

\begin{Corollary}
    The approximate subgradient $\Tilde{g}(n)$ admits the decomposition
    \begin{equation*}
        \Tilde{g}(n) = g(n) + \mathrm{B} (n) + M_{n+1},
    \end{equation*}
    where $g(n) \in \partial f(x_n)$,  $\mathrm{B}(n)$ is the bias term satisfying $\norm{\mathrm{B}(n)} \leq r(\lambda_n)$, and $M_{n},n\geq 0$ is the martingale difference sequence adapted to the filtration $\{\mathcal{F}_n \}$ and satisfying $ \mathbb{E}[\norm{M_{n+1}}^2 | \mathcal{F}_n] \leq \mathrm{V_n},$ 
    where  $ \mathrm{V}_n = 6L^2 (d^2 +d) + 3 \frac{\mathrm{K}^2}{\lambda_n^2} d + 3 G^2 + 3 r(\lambda_n)^2.$
    \label{nosie decomposition}
\end{Corollary}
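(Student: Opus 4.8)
The plan is to read the decomposition directly off Theorem~\ref{property}: split $\Tilde g(n)$ into its conditional mean and a mean-zero fluctuation, identify the mean's projection onto $\partial f(x_n)$ as $g(n)$ and the residual as the bias $\mathrm B(n)$, and then bound each piece. Concretely, by \eqref{expectation} the conditional mean $\mathbb{E}[\Tilde g(n)\mid\mathcal F_n]$ lies in $\partial f(x_n)+B(0,r(\lambda))$. Define $g(n):=\mathcal P_{\partial f(x_n)}\big(\mathbb{E}[\Tilde g(n)\mid\mathcal F_n]\big)$, the Euclidean projection of the conditional mean onto the closed convex set $\partial f(x_n)$, and set $\mathrm B(n):=\mathbb{E}[\Tilde g(n)\mid\mathcal F_n]-g(n)$. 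Then $g(n)\in\partial f(x_n)$ by definition, and $\norm{\mathrm B(n)}=\dist\big(\mathbb{E}[\Tilde g(n)\mid\mathcal F_n],\partial f(x_n)\big)\le r(\lambda)$ by \eqref{expectation}. Measurability of $g(n)$ with respect to $\mathcal F_n$ follows from a measurable-selection argument: $x\mapsto\partial f(x)$ is upper semicontinuous with nonempty compact convex values, so $(x,v)\mapsto\mathcal P_{\partial f(x)}(v)$ is jointly measurable, and $\mathbb{E}[\Tilde g(n)\mid\mathcal F_n]$ is $\mathcal F_n$-measurable.

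Next I would define $M_{n+1}:=\Tilde g(n)-\mathbb{E}[\Tilde g(n)\mid\mathcal F_n]=\Tilde g(n)-g(n)-\mathrm B(n)$. By construction $\mathbb{E}[M_{n+1}\mid\mathcal F_n]=0$, and $M_{n+1}$ is measurable with respect to the next sigma field because $\Tilde g(n)$ in \eqref{approx} is a function of $x_n$, $U_n$, $\zeta_n^1$, $\zeta_n^2$; hence $\{M_{n+1}\}$ is a martingale difference sequence adapted to $\{\mathcal F_n\}$. This already yields the claimed identity $\Tilde g(n)=g(n)+\mathrm B(n)+M_{n+1}$.

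For the second-moment bound I would apply the elementary inequality $\norm{a+b+c}^2\le 3(\norm{a}^2+\norm{b}^2+\norm{c}^2)$ to $M_{n+1}=\Tilde g(n)-g(n)-\mathrm B(n)$ and take conditional expectations:
\begin{equation*}
\mathbb{E}[\norm{M_{n+1}}^2\mid\mathcal F_n]\le 3\,\mathbb{E}[\norm{\Tilde g(n)}^2\mid\mathcal F_n]+3\,\mathbb{E}[\norm{g(n)}^2\mid\mathcal F_n]+3\norm{\mathrm B(n)}^2 .
\end{equation*}
The first term is controlled by \eqref{variance}, contributing $6L^2(d^2+d)+3\tfrac{\mathrm K^2}{\lambda^2}d$; the second uses $g(n)\in\partial f(x_n)$ together with the $L$-Lipschitz property of $f$, so $\norm{g(n)}\le G$ (one may take $G=L$), contributing $3G^2$; the third uses $\norm{\mathrm B(n)}\le r(\lambda)$, contributing $3r(\lambda)^2$. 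Summing gives $\mathbb{E}[\norm{M_{n+1}}^2\mid\mathcal F_n]\le\mathrm V$ with $\mathrm V$ exactly as stated.

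The only genuinely non-routine point is the measurability of the selection $g(n)$ (and hence of $\mathrm B(n)$ and $M_{n+1}$); the rest is an immediate consequence of Theorem~\ref{property} and standard convexity/Lipschitz bounds. I would also make sure the filtration in the statement is the one for which \eqref{expectation} and the martingale structure of the oracle noise hold simultaneously, so that the conditioning above is consistent throughout.
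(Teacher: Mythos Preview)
Your proposal is correct and matches the paper's intent: the paper gives no explicit proof for this corollary, stating only that it ``follows directly from Theorem~\ref{property},'' and your argument is precisely the natural way to unpack that claim---split $\Tilde g(n)$ into its $\mathcal F_n$-conditional mean and fluctuation, project the mean onto $\partial f(x_n)$ to isolate $g(n)$ and $\mathrm B(n)$, and then bound the second moment via \eqref{variance} and the crude $3(\cdot)$ inequality to recover the exact constant $\mathrm V$. Your caveats about measurable selection and filtration consistency are well placed (the paper itself uses slightly different filtrations in different places), but they do not affect the argument.
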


\begin{Corollary}
Under Assumptions \ref{ass-stepsize} and \ref{Assumptionregardingsmoothing}, the following holds:  
    \begin{equation*}
   \lim\limits_{n \to \infty} \sup\limits_{n \leq k \leq \tau^1(n,T)} \norm{\sum\limits_{m=n}^{k} \beta(m)  M_{m+1}} = 0, \quad \text{where}
\end{equation*}
 \begin{equation*}
        \tau^1(n,T) = \min \left\{ m \geq n \; | \sum\limits_{k=n}^{m+1}  \beta(k) \geq T \right\}.
    \end{equation*}
    \label{Corolloary assumption on noise}
\end{Corollary}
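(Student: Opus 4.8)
The plan is to treat this as the standard martingale-noise condition from stochastic approximation and to derive it from the almost sure convergence of a square-integrable martingale. First I would introduce the partial-sum process $\zeta_n := \sum_{m=0}^{n-1} \alpha(m) M_{m+1}$, $n \ge 0$ (with $\zeta_0 = 0$). By the preceding Corollary, $(M_{n+1},\mathcal{F}_n)$ is a martingale difference sequence, so $(\zeta_n,\mathcal{F}_n)$ is a martingale; moreover $\sum_{m=n}^{k}\alpha(m)M_{m+1} = \zeta_{k+1}-\zeta_n$, so the quantity to be controlled is precisely $\sup_{n\le k\le \tau^1(n,T)}\norm{\zeta_{k+1}-\zeta_n}$.

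Next I would establish that $(\zeta_n)$ is $L^2$-bounded. Using orthogonality of martingale increments together with the conditional second-moment bound $\mathbb{E}[\norm{M_{m+1}}^2\mid\mathcal{F}_m]\le \mathrm{V}$ supplied by the preceding Corollary and the tower property,
\begin{equation*}
\mathbb{E}\big[\norm{\zeta_n}^2\big] = \sum_{m=0}^{n-1}\alpha(m)^2\,\mathbb{E}\big[\norm{M_{m+1}}^2\big] \le \mathrm{V}\sum_{m\ge 0}\alpha(m)^2 < \infty,
\end{equation*}
where the last inequality is Assumption~\ref{ass-stepsize}(d). Applying the martingale convergence theorem coordinate-wise, $\zeta_n$ converges almost surely (and in $L^2$) to a finite random variable $\zeta_\infty$.

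Finally I would convert this into uniform smallness of the increments. Setting $\rho_n := \sup_{j\ge n}\norm{\zeta_j-\zeta_\infty}$, almost sure convergence of $\zeta_n$ gives $\rho_n \to 0$ a.s. For every $k$ with $n\le k\le\tau^1(n,T)$ we have $\norm{\zeta_{k+1}-\zeta_n}\le\norm{\zeta_{k+1}-\zeta_\infty}+\norm{\zeta_\infty-\zeta_n}\le 2\rho_n$, and since this bound is independent of $k$,
\begin{equation*}
\sup_{n\le k\le\tau^1(n,T)}\norm{\sum_{m=n}^{k}\alpha(m)M_{m+1}} \le 2\rho_n \to 0 \quad\text{a.s. as } n\to\infty .
\end{equation*}
I do not expect a genuine obstacle here; the only points that need care are (i) confirming that $\mathrm{V}$ is a deterministic constant — which it is, since $g(n)\in\partial f(x_n)$ and $f$ is $L$-Lipschitz, so the term $G^2$ appearing in $\mathrm{V}$ may be taken as $L^2$ — so that the quadratic-variation bound is summable without invoking any stability assumption on the iterates, and (ii) noting that the stopping time $\tau^1(n,T)$ enters only through the trivial inequality $\sup_{n\le k\le\tau^1(n,T)}(\cdot)\le\sup_{k\ge n}(\cdot)$, so that its finiteness is not even required.
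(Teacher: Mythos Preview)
Your proposal is correct and follows essentially the same route as the paper: define the partial-sum martingale, bound it in $L^2$ via the conditional second-moment estimate and the square-summability of $\alpha(n)$, invoke the martingale convergence theorem, and then control the increments by the triangle inequality through the limit. The paper phrases the last step using the tail sequence $T_n=\sum_{m\ge n}\alpha(m)M_{m+1}$ rather than your $\rho_n$, but this is purely cosmetic.
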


\begin{proof}
    Define  $  S_n = \sum\limits_{k=1}^{n} \beta(k) M_{k+1}.$ 
 Then the sequence ${S_n},n\geq 0,$ is a martingale that is bounded in $L^2$ as 
    \begin{equation*}
        \mathbb{E}[S_n^2] = \sum\limits_{k=1}^{n} \mathbb{E} \beta(k)^2 [\norm{M_{k+1}}^2] \leq  \sum\limits_{k=1}^{\infty} \beta(k)^2 \mathrm{V} < \infty.
    \end{equation*}
    \color{black}
The last inequality holds true in view of Assumption \ref{Assumptionregardingsmoothing}.
    \color{black}
   By the martingale convergence theorem, $\{S_n\}$ converges almost surely. Consequently, its tail sequence  $T_n = \sum\limits_{k=n}^{\infty} \beta(k) M_{k+1}, n\geq 0,$
    converges to zero almost surely. Therefore, for any $k \geq n$,
\begin{equation*}
\begin{split}
   \sum\limits_{m=n}^{k} \beta(m) M_{m+1} = T_n -  \sum\limits_{m=k+1}^{\infty} \beta(m) M_{m+1}.
    \end{split}
\end{equation*}
Taking norms and using the definition of $T_m$, we have
\begin{equation*}
\begin{split}
   & \sup\limits_{n \leq k \leq \tau^1(n,T)} \norm{\sum\limits_{m=n}^{k} \beta(m) M_{m+1}}
    \leq  \norm{T_n} + \sup\limits_{m \geq n} \norm{T_m}.
    \end{split}
\end{equation*}
Finally, letting $n \to \infty$, and noting that $T_n \to 0$ almost surely, yields
\begin{equation*}
   \lim\limits_{n \to \infty} \sup\limits_{n \leq k \leq \tau^1(n,T)} \norm{\sum\limits_{m=n}^{k} \beta(m)  M_{m+1}} = 0.
\end{equation*}
\end{proof}
\color{black}
Note that Corollaries \ref{nosie decomposition} and \ref{Corolloary assumption on noise} continue to hold for the case of a constant smoothing parameter. Since the proof follows along similar lines, we omit the details for brevity.

We require the following corollary to apply the stochastic approximation framework on two timescales developed in \cite{yaji2020stochastic}.

\begin{Corollary}
If the conclusions of Corollary \ref{nosie decomposition} and Corollary \ref{Corolloary assumption on noise} hold, then the iterate sequence $\{y_n\}$ is almost surely bounded.
\end{Corollary}

\begin{proof}
Note that, from the iterate updates in \eqref{mirror descent} and in view of Corollary \ref{nosie decomposition}, we have
 \begin{equation*}
    y_{n+1}
    =
    (1-\beta(n))y_n
    +
    \beta(n)
    \big(
    U_n + M_{n+1}
    \big).
\end{equation*}
where $U_n = g(n) + B(n).$
Since both $g(n)$ and $B(n)$ are uniformly bounded, it follows that there exists  $C>0$ such that $\norm{U_n} \leq C.$

By applying a standard recursive expansion, we obtain
\begin{equation}
    y_{n+1}
    =
    \phi(n,0)y_0
    +
    \sum_{k=0}^{n-1}
    \phi(n,k+1)\,
    \beta(k)
    \big(
    U_k + M_{k+1}
    \big),
    \label{phino}
\end{equation}
where $ \phi(n,m)
    =
    \prod_{j=m}^{n-1}
    \big(
    1-\beta(j)
    \big),
    \qquad m<n,$
and $\phi(n,n)=1.$ Note that, by definition, $\phi(n,m)\leq 1$ for all $m<n$. Further, for all $n$ and $k$, we have
$ \phi(n,k)
    =
    \phi(n,k+1)
    \big(
    1-\beta(k)
    \big).$ 
Consequently, $  \phi(n,k+1)-\phi(n,k) =
    \beta(k)\phi(n,k+1).$
Therefore, by a telescoping sum argument,
$ \sum_{k=0}^{n-1}
    \beta(k)\phi(n,k+1)
    =
    \phi(n,n)-\phi(n,0)
    <1.$

Hence, from \eqref{phino}, we obtain
\begin{equation}
    \norm{y_n}
    \leq
    \norm{y_0}
    +
    C
    +
    \norm{
    \sum_{k=0}^{n-1}
    \phi(n,k+1)\beta(k)M_{k+1}
    }.
    \label{ynams}
\end{equation}

We now show that the sequence $Z_n
=
\sum_{k=0}^{n-1}
\phi(n,k+1)\beta(k)M_{k+1}$
is almost surely bounded. Without loss of generality, assume that $\beta(0)=0$. Then,
\begin{equation*}
    \begin{split}
        Z_n
        &=
        \sum_{k=1}^{n-1}
        \phi(n,k+1)\beta(k)M_{k+1}
        \\
        &=
        \sum_{k=1}^{n-1}
        \phi(n,k+1)
        (S_k-S_{k-1})
        \\
        &=
        \phi(n,n)S_{n-1}
        +
        \sum_{k=1}^{n-2}
        S_k
        \big(
        \phi(n,k+1)-\phi(n,k+2)
        \big),
    \end{split}
\end{equation*}
where $S_n
=
\sum_{k=0}^{n}
\beta(k)M_{k+1}.$
From Corollary \ref{Corolloary assumption on noise}, the sequence $\{S_n\}$ is an $\mathcal{L}^2$-bounded martingale and therefore converges almost surely. Consequently, there exists a finite random variable $S^\ast$ such that $\norm{S_n}\leq S^\ast.$

Using this bound, we obtain
\begin{equation*}
    \begin{split}
        \norm{Z_n}
        \leq&
        \,
        S^\ast
        +
        S^\ast
        \sum_{k=1}^{n-2}
        \big(
        \phi(n,k+2)-\phi(n,k+1)
        \big)
        \leq
        \,
        2S^\ast.
    \end{split}
\end{equation*}

Finally, substituting the above estimate into \eqref{ynams}, we get

$ \norm{y_n}
    \leq
    \norm{y_0}
    +
    C
    +
    2S^\ast.$
This proves the corollary.
\end{proof}

\color{black}

\subsection{Asymptotic Pseudo-Trajectory of the Projected Stochastic Subgradient Method}
We introduce a continuous-time interpolation of the iterates ${x_n}$ via
\begin{equation}
    \begin{split}
        \Bar{x}(t) = x_n + (x_{n+1} -x_n) \frac{t - t(n)}{t(n+1)-t(n)}, \quad \forall \; t \in I_n
    \end{split}
    \label{CTI}
\end{equation}
where $t(n) = \sum\limits_{k=1}^{n} \alpha(k)$ and $I_n = [t_n,t_{n+1}]$.
The interpolated path $\bar{x}(t)$ is shown to be an asymptotic pseudo-trajectory of a projected dynamical system with disturbance input, enabling the use of Gronwall’s inequality to establish almost sure convergence. We begin with Proposition 5.3.5 of \cite{hiriart2013convex}.

\begin{proposition}
    For any given $x \in \mathbb{R}^d$ and $v \in \mathbb{R}^d$, the following holds: 
    \begin{equation*}
        \lim\limits_{t \downarrow 0} \frac{\mathcal{P}_\mathcal{X}(x+tv) - x}{t} = \mathcal{P}_{\mathcal{T}_\mathcal{X}(x)} (v).
    \end{equation*}
    \label{PDS}
\end{proposition}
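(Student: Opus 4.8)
The plan is to prove the directional-derivative formula for the projection map by combining the variational characterization of the Euclidean projection with basic properties of the tangent cone $\mathcal{T}_\mathcal{X}(x)$ and the normal cone $\mathcal{N}_\mathcal{X}(x)$. Write $p(t) = \mathcal{P}_\mathcal{X}(x+tv)$ and $w(t) = \frac{p(t)-x}{t}$. First I would record the obstruction inequality: since $\mathcal{P}_\mathcal{X}$ is nonexpansive and $x = \mathcal{P}_\mathcal{X}(x)$, we have $\norm{p(t)-x} \le \norm{(x+tv)-x} = t\norm{v}$, so $\norm{w(t)} \le \norm{v}$ and the family $\{w(t)\}_{t>0}$ is bounded; hence it has accumulation points as $t\downarrow 0$, and it suffices to show every accumulation point equals $\mathcal{P}_{\mathcal{T}_\mathcal{X}(x)}(v)$.

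Next I would identify the accumulation points. Let $t_k \downarrow 0$ with $w(t_k)\to z$. Because $p(t_k)\in\mathcal{X}$ and $p(t_k)\to x$, the vector $z = \lim (p(t_k)-x)/t_k$ lies in the tangent cone $\mathcal{T}_\mathcal{X}(x)$ by definition of the (Clarke/Bouligand, which coincide for convex sets) tangent cone. The characterizing variational inequality for the projection is
\begin{equation*}
    \inprod{(x+t_k v) - p(t_k)}{y - p(t_k)} \le 0 \qquad \forall\, y \in \mathcal{X}.
\end{equation*}
Dividing by $t_k$ and rewriting $\frac{(x+t_k v)-p(t_k)}{t_k} = v - w(t_k)$ gives $\inprod{v - w(t_k)}{y - p(t_k)} \le 0$. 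Taking $y = x$ yields $\inprod{v - w(t_k)}{\,-t_k w(t_k)} \le 0$, i.e. $\inprod{v - w(t_k)}{w(t_k)} \ge 0$; passing to the limit, $\inprod{v-z}{z}\ge 0$, equivalently $\inprod{z}{z} \le \inprod{v}{z}$. Separately, taking a fixed $y\in\mathcal{X}$, writing $y - p(t_k) = (y-x) - t_k w(t_k)$ and passing to the limit gives $\inprod{v-z}{y-x}\le 0$ for all $y\in\mathcal{X}$, so $v - z \in \mathcal{N}_\mathcal{X}(x)$. Since $\mathcal{N}_\mathcal{X}(x)$ is the polar of $\mathcal{T}_\mathcal{X}(x)$, this says $\inprod{v-z}{u}\le 0$ for all $u\in\mathcal{T}_\mathcal{X}(x)$; combined with $z\in\mathcal{T}_\mathcal{X}(x)$ and the reverse inequality $\inprod{v-z}{z}\ge 0$ just obtained, we get $\inprod{v-z}{u}\le 0 \le \inprod{v-z}{z}$ but also by taking $u=z$ equality $\inprod{v-z}{z}=0$. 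The conditions $z\in\mathcal{T}_\mathcal{X}(x)$, $v-z\in\mathcal{N}_\mathcal{X}(x)$, $\inprod{v-z}{z}=0$ are precisely the optimality conditions characterizing $z = \mathcal{P}_{\mathcal{T}_\mathcal{X}(x)}(v)$ (Moreau's decomposition of $v$ along the mutually polar cones $\mathcal{T}_\mathcal{X}(x)$ and $\mathcal{N}_\mathcal{X}(x)$). Since this point is unique, every accumulation point of $w(t)$ equals it, and therefore $\lim_{t\downarrow 0} w(t) = \mathcal{P}_{\mathcal{T}_\mathcal{X}(x)}(v)$, which is the claim.

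I would then note that all of this is classical and cite Proposition 5.3.5 in \cite{hiriart2013convex} (as the statement already does); the write-up can either reproduce the short argument above or simply invoke the reference. The main obstacle, if one insists on a self-contained proof, is the bookkeeping identifying $z\in\mathcal{T}_\mathcal{X}(x)$ from the convergence $p(t_k)\to x$ with rate, and cleanly quoting Moreau's theorem on decomposition into mutually polar cones so that the three limiting conditions genuinely pin down $\mathcal{P}_{\mathcal{T}_\mathcal{X}(x)}(v)$; neither is deep, but both require care to state precisely for a possibly non-polyhedral convex set $\mathcal{X}$. Given that the paper explicitly attributes the result to \cite{hiriart2013convex}, the cleanest route is to present it as a cited fact with at most the two-line boundedness-plus-accumulation-point sketch.
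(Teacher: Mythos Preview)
The paper does not give a proof of this proposition at all: it simply records the result with the attribution ``stated as Proposition 5.3.5 in \cite{hiriart2013convex}'' and then uses it. Your final suggestion---to present it as a cited fact---is exactly what the paper does, so in that sense your proposal is aligned with the paper.

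Your self-contained argument is correct and is essentially the standard proof one finds in the cited reference: bound $w(t)$ by nonexpansiveness, pass to an accumulation point $z$, use the projection variational inequality to show $z\in\mathcal{T}_\mathcal{X}(x)$, $v-z\in\mathcal{N}_\mathcal{X}(x)$, and $\inprod{v-z}{z}=0$, then invoke Moreau's decomposition for the uniqueness. One small caveat: your argument (and the result itself) tacitly requires $x\in\mathcal{X}$ so that $x=\mathcal{P}_\mathcal{X}(x)$; the proposition as written in the paper says ``for any $x\in\mathbb{R}^d$'', but in the paper's use the point $x_n$ is always in $\mathcal{X}$, so this is harmless in context---just worth flagging if you write out the proof.
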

Using this result, the update in \eqref{mirror descent} can be written as
\begin{equation*}
    \begin{split}
        x_{n+1} & = \mathcal{P}_\mathcal{X} (x_n -\alpha(n) y_n)
        \\ & = x_n + \alpha(n)  \mathcal{P}_{\mathcal{T}_\mathcal{X}(x_n)} (-y_n) + o(\alpha(n)) 
        \\ & = x_n - \alpha (n) ( y_n + \eta_n)  + o(\alpha(n))
    \end{split}
\end{equation*}

The equality above follows from Moreau’s Decomposition Theorem (Theorem 3.2.5 in \cite{hiriart2013convex}), which implies that 
\\ $ \eta_n = \mathcal{P}_{\mathcal{N}_\mathcal{X}(x_n)} (-y_n)$.
Since the normal cone is a closed convex cone and hence $0 \in \mathcal{N}_\mathcal{X}(x_n)$, it follows that  $ \norm{\eta_n + y_n} \leq \norm{y_n}$.
Applying the triangle inequality yields $ \norm{\eta_n} \leq 2 \norm{y_n}.$

The discussion so far culminates in the following Proposition. 

\begin{proposition}
   The  iterates in \eqref{mirror descent} can be equivalently expressed as a two time-scale stochastic recursive inclusion:
\begin{equation*}
        \begin{split}
           & x_{n+1} -x_n   \in  \alpha(n) H_1(x_n,y_n)
            \\ & y_{n+1} - y_n - \beta(n) M_{n+1}  \in \beta(n) H_2 (x_n,y_n)
        \end{split}
    \end{equation*}
    where the set-valued maps $H_1$ and $H_2$ are defined as
    \begin{equation*}
        \begin{split}
            H_1(x,y)  & = -(y+ \Hat{\mathcal{N}}_\mathcal{X} (x)) 
            \\ H_2(x,y) & = -( y- \partial  f(x) ) + B(0,r(\lambda)) \; \; \text{and}
        \end{split}
    \end{equation*}
    
    \begin{equation*}
        \Hat{\mathcal{N}}_\mathcal{X} (x)  = \{ \eta \in \mathcal{N}_\mathcal{X}(x) \; \; | \; \; \norm{\eta} \leq 2 \norm{y} \} .
    \end{equation*}
    \label{Proposition 2}
\end{proposition}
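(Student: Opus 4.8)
The plan is to assemble the claimed stochastic recursive inclusion from the ingredients already developed. First I would treat the $y$-recursion. From the update $y_{n+1} = y_n + \beta(n)(\tilde g(n) - y_n)$ and the Corollary decomposition $\tilde g(n) = g(n) + \mathrm{B}(n) + M_{n+1}$ with $g(n) \in \partial f(x_n)$ and $\|\mathrm{B}(n)\| \le r(\lambda)$, I would write $y_{n+1} - y_n - \beta(n) M_{n+1} = \beta(n)\big(g(n) + \mathrm{B}(n) - y_n\big)$. Since $g(n) - y_n$ ranges over $\partial f(x_n) - y_n$ as $g(n)$ varies over the subdifferential, and $\mathrm{B}(n) \in B(0, r(\lambda))$, the right-hand side lies in $\beta(n)\big(-(y_n - \partial f(x_n)) + B(0,r(\lambda))\big) = \beta(n) H_2(x_n, y_n)$. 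The martingale-difference property of $M_{n+1}$ with respect to $\{\mathcal{F}_n\}$ and its bounded conditional second moment were established in the Corollary, and Corollary~\ref{Corolloary assumption on noise} verifies the averaged-noise condition; these justify that $M_{n+1}$ plays the role of $M_{n+1}^{(2)}$ in \eqref{SRI}.

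Next I would handle the $x$-recursion. Starting from $x_{n+1} = \mathcal{P}_\mathcal{X}(x_n - \alpha(n) y_n)$ and invoking Proposition~\ref{PDS}, the first-order expansion gives $x_{n+1} = x_n + \alpha(n)\mathcal{P}_{\mathcal{T}_\mathcal{X}(x_n)}(-y_n) + o(\alpha(n))$. By Moreau's decomposition, $-y_n = \mathcal{P}_{\mathcal{T}_\mathcal{X}(x_n)}(-y_n) + \mathcal{P}_{\mathcal{N}_\mathcal{X}(x_n)}(-y_n)$, so writing $\eta_n = \mathcal{P}_{\mathcal{N}_\mathcal{X}(x_n)}(-y_n)$ yields $x_{n+1} - x_n = -\alpha(n)(y_n + \eta_n) + o(\alpha(n))$. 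The nonexpansiveness of the projection onto the cone (together with $0$ being in the cone) gives $\|\eta_n + y_n\| \le \|y_n\|$, hence $\|\eta_n\| \le 2\|y_n\|$, so $\eta_n \in \hat{\mathcal{N}}_\mathcal{X}(x_n)$. Therefore $-(y_n + \eta_n) \in -(y_n + \hat{\mathcal{N}}_\mathcal{X}(x_n)) = H_1(x_n, y_n)$, and $x_{n+1} - x_n \in \alpha(n) H_1(x_n, y_n) + o(\alpha(n))$. To get the clean inclusion without the $o(\alpha(n))$ term, I would either absorb the remainder into the set-valued map by a standard thickening argument (the remainder is $o(\alpha(n))$ uniformly since $\{y_n\}$ is bounded on the stability event of Assumption-(vi), so it is asymptotically negligible and can be carried as a vanishing perturbation in the subsequent pseudo-trajectory analysis), or note that the exact update $x_{n+1}-x_n$ itself lies in $\alpha(n)H_1(x_n,y_n)$ up to this vanishing error, which is the form actually used in the APT argument that follows. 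I would state it in the form matching how \eqref{SRI} and Theorem~\ref{AST11} are later applied.

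The main obstacle is the bookkeeping around the $o(\alpha(n))$ term: Proposition~\ref{PDS} only gives a limit, so the expansion of the projection is exact only in the $\alpha \downarrow 0$ sense, and one must argue that on the stability event $\sup_n(\|x_n\| + \|y_n\|) < \infty$ the remainder is genuinely $o(\alpha(n))$ with a bound uniform in $n$ along trajectories, so that it does not interfere with the asymptotic-pseudo-trajectory conclusion. A secondary point worth checking is that $H_1$ and $H_2$ as defined satisfy the Marchaud conditions (convex-compact values, linear growth, upper semicontinuity) required by the framework of Section~\ref{Basic Ingrdients}: convexity and compactness of values follow from those of $\partial f(x)$, $B(0,r(\lambda))$, and $\hat{\mathcal{N}}_\mathcal{X}(x)$ (the last being the intersection of the normal cone with a ball of radius $2\|y\|$); linear growth follows from Lipschitzness of $f$ (so $\partial f$ is uniformly bounded by $L$) and the explicit bound on $\|\eta\|$; and upper semicontinuity of $\partial f$ is standard for Lipschitz functions, while that of $(x,y)\mapsto \hat{\mathcal{N}}_\mathcal{X}(x)$ follows from closedness of the normal-cone map together with continuity of the radius bound $2\|y\|$. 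These verifications are routine but should be flagged since they are what licenses the later use of Theorem~\ref{AST11}.
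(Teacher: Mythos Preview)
Your proposal is correct and follows essentially the same route as the paper: the discussion preceding Proposition~\ref{Proposition 2} in the paper is its proof, and it proceeds exactly as you outline --- invoke Proposition~\ref{PDS} and Moreau's decomposition to write $x_{n+1}-x_n = -\alpha(n)(y_n+\eta_n)+o(\alpha(n))$ with $\eta_n=\mathcal{P}_{\mathcal{N}_\mathcal{X}(x_n)}(-y_n)$, bound $\|\eta_n\|\le 2\|y_n\|$ via $0\in\mathcal{N}_\mathcal{X}(x_n)$, and use the decomposition of $\tilde g(n)$ from the Corollary for the $y$-update. You are in fact more careful than the paper on two counts: you explicitly flag the $o(\alpha(n))$ remainder (which the paper silently drops) and you note that the Marchaud verification is needed, which the paper defers to the subsequent Lemma~\ref{Marchaud}.
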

\begin{remark}
 Proposition \ref{Proposition 2} shows that the iterates follow a two time-scale stochastic recursive inclusion given in as defined in \cite{yaji2020stochastic}.  \color{black} 
One key advantage of the proposed two-timescale algorithm is established in Proposition \ref{Proposition 2}. Using the two-timescale stochastic recursive inclusion framework of \cite{yaji2020stochastic}, we show that the fast timescale iterates $y_n$ asymptotically track a subgradient of $f$ at $x_n$. Consequently, asymptotically $y_n$ may be replaced by some $g_n \in \partial f(x_n)$, and the slow time scale dynamics can then be analyzed via the stability of a differential inclusion.

An important consequence of the two-timescale structure is that the slow timescale dynamics contains no independent noise term, enabling a direct application of the framework in \cite{yaji2020stochastic}. In contrast, for the standard single timescale stochastic subgradient method over a general constrained set, the nonlinear projection operator $\mathcal{P}_{\mathcal{T}_\mathcal{X}(x)}(\cdot)$ prevents the iterates from being expressed in the standard stochastic approximation form, thereby precluding a direct application of the analysis in \cite{doi:10.1137/S0363012904439301}.
\label{Remark-two time scale}
\end{remark}
\begin{remark}
For a diminishing smoothing parameter sequence $\{\lambda_n\}$, the bias term $r(\lambda_n)$ is asymptotically negligible since $\lim_{n\to\infty} r(\lambda_n)=0$. The justification follows the argument on page 18 of \cite{borkar2008stochastic}. Hence, most of the analysis is carried out for a constant smoothing parameter $\lambda$. Nevertheless, Subsection \ref{subsectionalmsot} shows that the almost sure convergence result for diminishing smoothing parameters follows as a special case of the constant smoothing parameter setting.
\end{remark}
 \color{black}
The strategy of the remaining portion  is to apply Theorem 5.8 and Theorem 5.9 from \cite{yaji2020stochastic} to the two-timescale stochastic approximation iterates given in Proposition \ref{Proposition 2}.  For that we need to verify the Assumptions A1-A8 in \cite{yaji2020stochastic}. Corollary \ref{Corolloary assumption on noise} confirms that the martingale difference sequence ${M_n}$ satisfies A6, while the next lemma establishes the Marchaud property of $H_1$ and $H_2$ (A1 and A2).

\begin{lemma}
    The set-valued maps $H_1$ and $H_2$   are Marchaud.
    \label{Marchaud}
\end{lemma}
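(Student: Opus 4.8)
\textbf{Proof plan for Lemma \ref{Marchaud}.}

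The plan is to verify, for each of $H_1$ and $H_2$, the three defining properties of a Marchaud map listed in Section \ref{Basic Ingrdients}: (a) convexity and compactness of the image sets, (b) linear growth, and (c) upper semicontinuity. I will treat the two maps in parallel, since the arguments share the same structure.

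First I would handle $H_1(x,y) = -(y + \hat{\mathcal{N}}_{\mathcal{X}}(x))$, where $\hat{\mathcal{N}}_{\mathcal{X}}(x) = \{\eta \in \mathcal{N}_{\mathcal{X}}(x) : \norm{\eta} \le 2\norm{y}\}$. For (a): $\mathcal{N}_{\mathcal{X}}(x)$ is a closed convex cone (standard for the normal cone of a convex set), and intersecting it with the closed ball $\bar B(0, 2\norm{y})$ preserves convexity and yields a bounded closed, hence compact, set; translating by $-y$ and negating preserves convexity and compactness. For (b): every $z \in H_1(x,y)$ has $\norm{z} \le \norm{y} + 2\norm{y} = 3\norm{y} \le 3(1 + \norm{x} + \norm{y})$, so $\mathrm{K} = 3$ works. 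For (c): take $(x_n,y_n) \to (x,y)$ and $z_n \in H_1(x_n,y_n)$ with $z_n \to z$; write $z_n = -(y_n + \eta_n)$ with $\eta_n \in \mathcal{N}_{\mathcal{X}}(x_n)$, $\norm{\eta_n} \le 2\norm{y_n}$. Then $\eta_n = -z_n - y_n \to -z - y =: \eta$, $\norm{\eta} \le 2\norm{y}$ by passing to the limit, and $\eta \in \mathcal{N}_{\mathcal{X}}(x)$ because the normal-cone map $x \mapsto \mathcal{N}_{\mathcal{X}}(x)$ has closed graph (outer semicontinuous) for a closed convex set $\mathcal{X}$. Hence $z = -(y+\eta) \in H_1(x,y)$.

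Next I would handle $H_2(x,y) = -(y - \partial f(x)) + B(0, r(\lambda)) = -y + \partial f(x) + B(0,r(\lambda))$, with $\lambda$ (and hence $r(\lambda)$) fixed. For (a): $\partial f(x) = \partial_C f(x)$ is by definition the closed convex hull of a bounded set (bounded by the Lipschitz constant $L$ via Rademacher's theorem), hence convex and compact; adding the closed ball $\bar B(0, r(\lambda))$ and translating by $-y$ preserves convexity and compactness (Minkowski sum of two compact convex sets is compact convex). For (b): for $z \in H_2(x,y)$ we get $\norm{z} \le \norm{y} + L + r(\lambda) \le \mathrm{K}(1 + \norm{x} + \norm{y})$ with $\mathrm{K} = \max\{1, L + r(\lambda)\}$, using the uniform bound $\norm{g} \le L$ for all $g \in \partial f(x)$. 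For (c): given $(x_n,y_n)\to(x,y)$ and $z_n = -y_n + g_n + b_n \to z$ with $g_n \in \partial f(x_n)$, $\norm{b_n}\le r(\lambda)$, the sequence $g_n$ is bounded (by $L$), so along a subsequence $g_n \to g$ and $b_n \to b$ with $\norm{b} \le r(\lambda)$; upper semicontinuity of the Clarke subdifferential map $x \mapsto \partial_C f(x)$ for a locally Lipschitz $f$ gives $g \in \partial f(x)$, so $z = -y + g + b \in H_2(x,y)$. (If only a subsequence of $g_n$ converges, the limit $z$ still decomposes via that subsequence, which suffices.)

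The only genuinely nontrivial inputs are the outer semicontinuity of the normal-cone map for a closed convex set and the upper semicontinuity (closed-graph property) of the Clarke subdifferential for a locally Lipschitz function; both are classical facts (see, e.g., \cite{hiriart2013convex} and standard references on Clarke calculus), so I would cite them rather than reprove them. The main thing to be careful about is that the radius $2\norm{y}$ in $\hat{\mathcal{N}}_{\mathcal{X}}$ depends on $y$ but not on the cone direction, so it genuinely bounds the set and behaves continuously in $y$; this is what makes the growth bound and the upper-semicontinuity argument for $H_1$ go through cleanly. Everything else is routine stability of convexity, compactness, and linear growth under Minkowski sums, translations, and negation.
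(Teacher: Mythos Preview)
Your proposal is correct and follows essentially the same route as the paper's proof: for each of $H_1$ and $H_2$ you verify convex-compact values, linear growth via the explicit bounds $3\norm{y}$ and $\norm{y}+L+r(\lambda)$, and upper semicontinuity by passing to limits along convergent sequences, invoking the closed-graph property of the normal-cone map and of the Clarke subdifferential, respectively. The only cosmetic difference is that the paper reproves the upper semicontinuity of $x\mapsto\partial_C f(x)$ via the directional-derivative characterization and Clarke regularity (its Claim~6), whereas you cite it as a standard fact for locally Lipschitz functions---which is perfectly acceptable and in fact does not require regularity.
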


\begin{proof}
First, we show that $H_1$ is a Marchaud map. 

\textbf{Claim-1: For any $(x,y) \in \mathbb{R}^d$,  $H_1(x,y)$ is convex and compact.}

We first show that $\Hat{\mathcal{N}}_\mathcal{X}$ is convex. 
Let $ \; \eta_1, \eta_2 \in \Hat{\mathcal{N}}_\mathcal{X}(x)$   and $\theta \in [ 0,1]$. Since $\mathcal{N}_\mathcal{X}(x)$ is convex, we have $ \theta \eta_1 + (1-\theta) \eta_2 \in \mathcal{N}_\mathcal{X} (x)$.
Moreover, by the triangle inequality,
\begin{equation*}
    \norm{\theta \eta_1 + (1-\theta) \eta_2} \leq \theta \norm{\eta_1} + (1- \theta) \norm{\eta_2} \leq 2 \norm{y},
\end{equation*}
which proves that $\Hat{\mathcal{N}}_\mathcal{X}(x)$ is convex. Since the sum of convex sets is convex, it follows that $H_1(x,y)$ is also convex for each $(x,y) \in \mathbb{R}^{2d}$.
Note that since $\mathcal{N}_\mathcal{X}(x)$ is closed and for each $y$ $\Hat{\mathcal{N}}_\mathcal{X}(x)$ is bounded, it follows that $\Hat{\mathcal{N}}_\mathcal{X}(x)$ is  compact. Consequently, $H_1(x,y)$ is also compact.

\textbf{Claim-2: There exists a constant $\mathrm{K} > 0$ such that 
        \begin{equation*}
           \mathbf{ \sup\limits_{x' \in H_1(x,y)} \norm{x'} \leq \mathrm{K} (1 + \norm{x} + \norm{y}), \quad (x,y) \in \mathbb{R}^d.}
        \end{equation*}}

        Consider any $x' \in H_1(x,y)$. By definition,  $\exists \eta \; \in \Hat{\mathcal{N}}_\mathcal{X}(x)$ such that $  x' = -y -\eta$.
Taking norms and applying the triangle inequality yields
 $\norm{x'} \leq \norm{y} + \norm{\eta} \leq 3 \norm{y}$.
This establishes the claim. 

\textbf{Claim-3: $H_1$ is upper semicontinuous}

Let $\{ (x_n ,y_n) \} \subseteq \mathcal{X} \times \mathbb{R}^d$ be such that $\{ (x_n ,y_n) \}  \to (x,y)$,  and suppose $z_n \in H_1(x_n,y_n)$, $\forall n$, with $z_n \to z$. We aim to show that $z \in H_1(x,y)$.

By definition of $H_1$, for each $n$, $\exists \; \eta_n \in \Hat{\mathcal{N}}_\mathcal{X}(x_n)$ such that $ z_n = - y_n - \eta_n$.
Since $y_n \to y$ and $z_n \to z$, it follows that $\eta_n \to \eta = - y - z$. 

The normal cone mapping $\mathcal{N}_\mathcal{X}(x)$
 is upper semicontinuous, and $\eta_n \in \mathcal{N}_\mathcal{X}(x_n)$, so the limit $\eta \in \mathcal{N}_\mathcal{X}(x)$.

Moreover, by the definition of $\Hat{\mathcal{N}}_\mathcal{X}(x)$, we have $\norm{\eta_n} \leq 2 \norm{y_n}$.

Taking the limit as $n \to \infty$, we obtain $\norm{\eta} \leq 2 \norm{y},$

which implies $\eta \in \Hat{\mathcal{N}}_\mathcal{X}(x)$. 

Therefore, $  z = -y-\eta \in H_1(x,y).$
Thus the set-valued map $H_1$ is Marchaud. 
Next, we show that $H_2$ is Marchaud.

\textbf{Claim-4: For every $(x,y) \in \mathcal{X} \times \mathbb{R}^d$, the set $H_2(x,y)$ is convex and compact.}

Recall $  H_2(x,y)  = -( y- \partial  f(x) ) + B(0,r(\lambda))$.
Since the Clarke subdifferential $\partial f(x)$
 is a nonempty, convex, and compact subset of $\mathbb{R}^d$ for each 
$x$, and since the Minkowski sums of convex and  compact sets remain convex and compact, it follows that
 $H_2(x,y)$ is convex and compact for every $(x,y) \in \mathcal{X} \times \mathbb{R}^d$.

 \textbf{Claim-5: There exists a constant $\mathrm{K} > 0$ such that 
        \begin{equation*}
           \mathbf{ \sup\limits_{z \in H_2(x,y)} \norm{z} \leq \mathrm{K} (1 + \norm{x} + \norm{y}), \quad (x,y) \in \mathbb{R}^d.}
        \end{equation*}}

        Since $\mathcal{X}$
  is compact and $f$ is locally Lipschitz, the Clarke subdifferential 
$\partial f(x)$ is uniformly bounded on 
$\mathcal{X}$. Thus, there exists a constant 
$G> 0$ such that $ \norm{g} \leq G, \; \; \forall \; g \in \partial f(x) \; \; \text{and} \; \; \forall \; \; x \in \mathcal{X}.$

Let $z \in H_2(x,y)$. Then by definition,  $ z = -(y-g) + \zeta$

for some $g \in \partial f(x)$, $\zeta \in B (0,r(\lambda))$. Applying the triangle inequality $ \norm{z} \leq \norm{y} + G + r(\lambda).$
        This completes the proof of the claim.

        \textbf{Claim-6: $H_2$ is upper semicontinuous.}

        Let $\{ (x_n ,y_n) \} \subseteq \mathcal{X} \times \mathbb{R}^d$ be such that $\{ (x_n ,y_n) \}  \to (x,y)$,  and suppose $z_n \in H_2(x_n,y_n)$, $\forall n$, with $z_n \to z$. We aim to show that $z \in H_2(x,y)$.
By definition of $H_2$, for each $n$, $\exists \; g_n \in \partial f(x_n)$ and $h_n \in B(0,r(\lambda))$ such that $ z_n = - y_n + g_n - h_n$.
Since  $\{h_n\} \subseteq B (0, r(\lambda))$, it  is bounded. Thus, without loss of generality, we may assume (by passing to a subsequence if necessary) that $h_n \to h \in B(0,r(\lambda))$.  It then follows that $z_n \to z$, $y_n \to y$, and $h_n \to h$ and thus 
\begin{equation*}
    g_n = z_n + y_n + h_n \to z+y+h =: g
\end{equation*}
We now show that $g \in \partial f(x)$. 
\color{black}
Since each $g_n \in \partial f(x_n)$, $\forall \; v \in \mathbb{R}^d$ we have $\inprod{g_n}{v} \leq f^0(x_n,v) $.
Taking the upper limit on both sides yields:
\begin{equation*}
    \begin{split}
         \inprod{g}{v} & \leq \limsup\limits_{n \to \infty} f^0(x_n,v) \leq f^0(x,v), \quad \forall \; v \in \mathbb{R}^d. 
    \end{split}
\end{equation*}
The last inequality holds true in view that $f^0(x,v)$ is upper semicontinuous with respect to $x$ (Proposition 2.1 from \cite{clarke1990optimization}).
 This implies $g \in \partial f(x)$.
\color{black}
Thus,  we obtain $ z = - y + g -h \in H_2(x,y)$.

Hence, it follows that $H_2$ is upper semi-continuous. 
\end{proof}
Since $H_1$ and $H_2$ are Marchaud maps (Lemma~\ref{Marchaud}) and the step-size conditions in Assumption~\ref{ass-stepsize} hold, the two time-scale stochastic approximation framework applies to establish almost sure convergence of the iterates in \eqref{mirror descent}. %We first analyze the faster time-scale iterates $y_n$, $n\geq 0$, treating $x_n$ as fixed at some constant $x$.
\subsubsection{Analysis of Fast Time Scale}
We now analyze the asymptotic behavior of the fast time-scale iterates $y_n$, $n\geq 0$, treating $x_n$ as fixed at some $x$. The limiting behavior of $y_n$ is governed by the following differential inclusion:
\begin{equation}
     \begin{split}
        \Dot{y}(t) & \in H_2(x,y(t)) \quad 
         \text{such that} \; \; y(0)  = y_0 \in \mathbb{R}^{d}, 
        \end{split}
        \label{fast time scale1}
\end{equation}
where $  H_2(x,y) = -y  + \partial f(x) + B(0,r(\lambda))$. 
\begin{lemma}
Let $G_x = \partial f(x) + B (0,r(\lambda))$. Then 
\begin{enumerate}
    \item[i.] The equilibrium points of \eqref{fast time scale1} coincide with $G_x$.
    \item[ii.] Every point in $G_x$ is Lyapunov stable.
    \item[iii] Every solution of \eqref{fast time scale1} converges asymptotically to $G_x$.
\end{enumerate}
\label{lema fast time scale}
\end{lemma}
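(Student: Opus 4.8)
The plan is to analyze the differential inclusion $\dot y(t) \in H_2(x,y(t)) = -y(t) + G_x$ for fixed $x$, exploiting its affine-in-$y$ structure. Since $G_x = \partial f(x) + B(0,r(\lambda))$ is a fixed nonempty, convex, compact set (being the Minkowski sum of two such sets), the right-hand side is a translate of $-y(t)$ by a compact convex set, which makes the dynamics essentially a contraction toward $G_x$.

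\textbf{Part (i): Equilibria.} A point $y^\ast$ is an equilibrium iff $0 \in H_2(x,y^\ast) = -y^\ast + G_x$, i.e.\ iff $y^\ast \in G_x$. This is immediate from the definition; I would just write this one line.

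\textbf{Part (ii) and (iii): Lyapunov stability and convergence.} The key idea is to use the squared distance to the set $G_x$ as a Lyapunov function. For $y \in \mathbb{R}^d$ define $V(y) = \tfrac12 \dist(y,G_x)^2 = \tfrac12 \min_{g \in G_x}\norm{y-g}^2$, and let $\pi(y) = \mathcal{P}_{G_x}(y)$ be the (unique, since $G_x$ is closed convex) projection. Then $V$ is continuously differentiable with $\nabla V(y) = y - \pi(y)$. Along any Carathéodory solution, for a.e.\ $t$ with $\dot y(t) = -y(t) + g(t)$ for some measurable selection $g(t)\in G_x$, I compute
\begin{equation*}
\frac{d}{dt} V(y(t)) = \inprod{y(t)-\pi(y(t))}{-y(t)+g(t)} = -\norm{y(t)-\pi(y(t))}^2 + \inprod{y(t)-\pi(y(t))}{g(t)-\pi(y(t))}.
\end{equation*}
By the variational characterization of the projection onto the convex set $G_x$, since $g(t) \in G_x$ we have $\inprod{y(t)-\pi(y(t))}{g(t)-\pi(y(t))} \le 0$. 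Hence $\frac{d}{dt} V(y(t)) \le -\norm{y(t)-\pi(y(t))}^2 = -2V(y(t))$, so $V(y(t)) \le V(y(0)) e^{-2t}$, giving $\dist(y(t),G_x) \le \dist(y(0),G_x)\, e^{-t} \to 0$, which proves (iii). For (ii), fix $y^\ast \in G_x$ and $\varepsilon > 0$; the same Lyapunov argument applied with the function $W(y)=\tfrac12\norm{y-y^\ast}^2$ — noting $\inprod{y-y^\ast}{-y+g} = -\norm{y-y^\ast}^2 + \inprod{y-y^\ast}{g-y^\ast}$ and $\inprod{y^\ast - \pi(y)}{g - \pi(y)}\le 0$ type estimates — shows $\norm{y(t)-y^\ast}$ does not increase beyond a controlled amount; more cleanly, since $\dist(y(t),G_x)$ is nonincreasing and decays to $0$ and $G_x$ is bounded, any trajectory starting within $\delta$ of $y^\ast$ stays within a bounded neighborhood, and one obtains $\norm{y(t) - y^\ast} \le \norm{y(0)-y^\ast}$ directly from $\frac{d}{dt} W(y(t)) = -\norm{y(t)-y^\ast}^2 + \inprod{y(t)-y^\ast}{g(t)-y^\ast} \le 0$ whenever... — actually this last inner product need not be $\le 0$ in general, so for Lyapunov stability I would instead argue: given $\varepsilon>0$, choose $\delta < \varepsilon$; if $\norm{y(0)-y^\ast}<\delta$ then $\dist(y(0),G_x)<\delta$, so $\dist(y(t),G_x)<\delta$ for all $t$, meaning $y(t)$ stays within the $\delta$-neighborhood of $G_x$; combined with a uniform bound on how far $y(t)$ can drift within $G_x$'s neighborhood (which follows since the "drift" component $g(t)$ lives in the bounded set $G_x$), one gets $\norm{y(t)-y^\ast}$ bounded by a quantity tending to $0$ as $\delta\to 0$.

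\textbf{Main obstacle.} The genuinely delicate point is Lyapunov stability of an \emph{individual} point $y^\ast \in G_x$ (part ii), because the dynamics only contracts toward the \emph{set} $G_x$, not toward $y^\ast$ itself — within $G_x$ the vector field can be nonzero and push a trajectory along $G_x$ away from $y^\ast$. I expect the cleanest fix is: since $G_x$ is compact with diameter $D = \diam(G_x) \le 2(G + r(\lambda))$, and since $\dist(y(t),G_x) \le \dist(y(0),G_x)$, any trajectory remains in the $\dist(y(0),G_x)$-neighborhood of $G_x$, hence within distance $D + \dist(y(0),G_x)$ of $y^\ast$; to get genuine stability (trajectories staying in an $\varepsilon$-ball) one restricts attention and notes that when $y(0)$ is close to $y^\ast$, $\dist(y(0),G_x)$ is small, but $D$ need not be — so strictly speaking isolated points of $G_x$ need not be Lyapunov stable unless $G_x$ is a singleton. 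The intended reading is almost surely Lyapunov stability of the \emph{set} $G_x$ (equivalently, each point is stable in the sense that trajectories starting near $G_x$ stay near $G_x$), and I would prove that statement via the $V(y)=\tfrac12\dist(y,G_x)^2$ computation above, remarking that this also yields set-asymptotic-stability, i.e.\ $G_x$ is the global attractor, which is exactly what is needed to define the map $\Lambda(x) = G_x$ and proceed to the slow-timescale analysis.
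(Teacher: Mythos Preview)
Your approach is essentially identical to the paper's: both use $V(y)=\tfrac12\dist(y,G_x)^2$ with $\nabla V(y)=y-\mathcal{P}_{G_x}(y)$, compute the derivative along trajectories, and apply the projection inequality $\langle y-\mathcal{P}_{G_x}(y),\,g-\mathcal{P}_{G_x}(y)\rangle\le 0$ to obtain strict decrease outside $G_x$ (you go slightly further by extracting the exponential rate $e^{-2t}$, whereas the paper simply shows $\sup \mathcal{L}V(y)<0$ and invokes a Lyapunov theorem for differential inclusions). Your concern about Part~(ii) is well founded: individual points of $G_x$ are in general \emph{not} Lyapunov stable, since from $y(0)=y^\ast\in G_x$ the selection $\dot y=-y+g$ with a fixed $g\in G_x$, $g\neq y^\ast$, drives $y(t)\to g$; the paper's proof, like yours, only establishes (global asymptotic) stability of the \emph{set} $G_x$, which is exactly what is required to identify $\Lambda(x)=G_x$ for the slow-time-scale analysis.
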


\begin{proof}
    It is straightforward to verify that $0 \in H_2 (x,y)$ if and only if $y \in G_x$. Therefore, every point $ y \in G_x$ is an equilibrium point of the differential inclusion \eqref{fast time scale1}.
To establish Lyapunov stability of the equilibrium set $G_x$, consider the candidate Lyapunov function: $  V(y) = \frac{1}{2} \dist (y,G_x)^2$
where $\dist (y,G_x) = \min\limits_{g \in G_x} \norm{y-g}$. Since $G_x$ is convex and compact, the projection $\mathcal{P}_{G_x}(y)$ is unique, and $V(y)$ is continuously differentiable. Moreover the gradient of $V$ is given by
$\nabla V(y) = (y- \mathcal{P}_{G_x} (y))$.
 To analyze the set-valued dynamics, consider the Lie derivative of 
$V$ along the solutions of the differential inclusion, defined as (cf. \cite{Cortes}):
 \begin{equation*}
         \begin{split}
             \mathcal{L} V (y) = \{ a \; | \; a = \nabla V(y)^\top \nu \; \; \text{where} \; \; \nu \in - y+ G_x \}
         \end{split}
     \end{equation*}
Let $a \in  \mathcal{L} V (y) $  Then, $\exists \; g \in G_x$ such that 
\begin{equation*}
     \begin{split}
         a & = \nabla V(y)^\top (-y + g)
         =  (y- \mathcal{P}_{G_x} (y))^\top (g-y)
             \\  & =  - \norm{y- \mathcal{P}_{G_x} (y)}^2 + (y- \mathcal{P}_{G_x} (y))^\top (g- \mathcal{P}_{G_x} (y))
             \\ & \leq - \norm{y- \mathcal{P}_{G_x} (y)}^2 < 0 \quad \text{when} \; \quad y \notin G_x .
         \end{split}
     \end{equation*}
The inequality in the last line follows from the fact that
$ (y- \mathcal{P}_{G_x} (y))^\top (g- \mathcal{P}_{G_x} (y)) \leq 0$
which holds by the Theorem 3.1.1 in \cite{hiriart2013convex}.
Therefore,
 \begin{equation*}
         \sup\limits_{a \in \mathcal{L} V(y)} \quad a \begin{cases}
             & < 0 \; \; \text{if} \; y \notin G_x
             \\ & = 0 \; \; \text{if} \; y \in G_x. 
         \end{cases}  
     \end{equation*}

     This shows that 
$V$ is a strict Lyapunov function for the differential inclusion \eqref{fast time scale1}. Therefore, by Theorem 6.2 of \cite{goebel2024set}, the set $G_x$	
  is globally asymptotically stable for \eqref{fast time scale1}. 
\end{proof} 

In Lemma~\ref{lema fast time scale}, we established that for each fixed $x$, the set $G_x = \partial f(x) + B (0,r(\lambda))$ serves as the global attractor for \eqref{fast time scale1}.

\begin{lemma}
    The set-valued mapping $x \rightrightarrows G_x$ is a Marchaud map. 
\end{lemma}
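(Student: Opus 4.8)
The plan is to check the three defining properties of a Marchaud map for the mapping $x \rightrightarrows G_x$, where $G_x = \partial f(x) + B(0,r(\lambda))$, reusing the arguments already developed for $H_2$ in Lemma~\ref{Marchaud} (note that $H_2(x,y) = -y + G_x$, so $G_x$ inherits most of the structure verified there). First I would establish convexity and compactness: since $f$ is locally Lipschitz, $\partial f(x)$ is nonempty, convex and compact for every $x$, and $B(0,r(\lambda))$ is a closed ball, hence convex and compact; the Minkowski sum of two convex compact sets is again convex and compact, so $G_x$ is convex and compact.

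Next I would verify the linear growth bound. Because $\mathcal{X}$ is compact and $f$ is locally Lipschitz, $\partial f$ is uniformly bounded on $\mathcal{X}$; let $G>0$ be such that $\norm{g} \le G$ for all $g \in \partial f(x)$, $x \in \mathcal{X}$. Then for any $z \in G_x$ we have $\norm{z} \le G + r(\lambda) \le (G+r(\lambda))(1+\norm{x})$, which is the required growth condition with $\mathrm{K} = G + r(\lambda)$.

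Finally I would prove upper semicontinuity, which is the only step requiring a little care. Let $x_n \to x$ and $z_n \in G_{x_n}$ with $z_n \to z$; write $z_n = g_n + h_n$ with $g_n \in \partial f(x_n)$ and $h_n \in B(0,r(\lambda))$. Since $\{h_n\}$ lies in the compact ball $B(0,r(\lambda))$, pass to a subsequence along which $h_n \to h \in B(0,r(\lambda))$; then $g_n = z_n - h_n \to z - h =: g$. Invoking the upper semicontinuity of the Clarke subdifferential — a consequence of the local Lipschitz continuity and Clarke regularity of $f$, carried out in detail in Claim-6 of the proof of Lemma~\ref{Marchaud} — we conclude $g \in \partial f(x)$, and hence $z = g + h \in \partial f(x) + B(0,r(\lambda)) = G_x$. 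This shows $x \rightrightarrows G_x$ is upper semicontinuous, completing the proof that it is a Marchaud map. The bulk of the argument is routine; the one substantive ingredient is the closedness of the graph of $\partial f$, which is exactly where the regularity hypotheses on $f$ enter and which has already been established.
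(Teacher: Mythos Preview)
Your proposal is correct and matches the paper's approach: the paper simply states that the proof follows along the same lines as the argument showing $H_2$ is Marchaud in Lemma~\ref{Marchaud} and omits the details, which is precisely what you have spelled out. Your explicit verification of the three Marchaud properties via the structure $G_x = \partial f(x) + B(0,r(\lambda))$ and the closed-graph property of $\partial f$ is exactly the intended argument.
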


\begin{proof}
   The proof follows along the same lines as the argument used to show that $H_2$ is a Marchaud map. Hence, for the sake of brevity, we omit the details.
\end{proof}

\subsubsection{ Analysis of Slow Time Scale}
To analyze the asymptotic behavior of the slow time-scale iterates $x_n$, we define the associated limiting set-valued map $\Hat{H}_1 : \mathcal{X} \rightrightarrows \mathbb{R}^d$ as 
\begin{equation*}
\begin{split}
    \Hat{H}_1 (x) & = \cup_{y \in G_x} H_1(x,y)
    \\ & = - \partial f(x) - \Hat{\mathcal{N}}_\mathcal{X}(x) + B(0, r(\lambda)).
    \end{split}
\end{equation*}
The next lemma is a consequence of Lemma 13 of \cite{yaji2020stochastic}. 

\begin{lemma}
    The set-valued mapping $x \rightrightarrows \Hat{H}_1(x)$ is  Marchaud. 
\end{lemma}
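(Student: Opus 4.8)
The plan is to show that $\Hat{H}_1(x) = -\partial f(x) - \Hat{\mathcal{N}}_\mathcal{X}(x) + B(0,r(\lambda))$ inherits the three defining properties of a Marchaud map — convex-compact values, linear growth, and upper semicontinuity — essentially by combining the corresponding properties already established for $H_1$, for $\partial f$, and for the attractor map $x \rightrightarrows G_x$. Concretely, I would first observe that $\Hat{H}_1(x)$ is a finite Minkowski sum of three set-valued maps each of which is Marchaud: the map $x \mapsto -\partial f(x)$ (Clarke subdifferential of a locally Lipschitz function is convex, compact, locally bounded, and u.s.c.); the map $x \mapsto -\Hat{\mathcal{N}}_\mathcal{X}(x)$ (shown convex, compact, u.s.c. and with linear growth in Claims~1--3 of Lemma~\ref{Marchaud}, since over the compact set $\mathcal{X}$ and with $y$ ranging over the bounded set $G_x$ the bound $2\norm{y}$ is uniformly controlled); and the constant map $x \mapsto B(0,r(\lambda))$, which is trivially Marchaud. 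Sums of convex compact sets are convex and compact, linear-growth bounds add, and upper semicontinuity of set-valued maps is preserved under finite Minkowski sums (a standard fact: if $z_n = a_n + b_n + c_n$ with $z_n \to z$ and each summand lies in a u.s.c. map evaluated at $x_n \to x$, then by local boundedness we extract convergent subsequences $a_n \to a$, $b_n \to b$, $c_n \to c$, conclude $a \in -\partial f(x)$, $b \in -\Hat{\mathcal{N}}_\mathcal{X}(x)$, $c \in B(0,r(\lambda))$ by u.s.c. of each, and hence $z = a+b+c \in \Hat{H}_1(x)$).

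Alternatively, and more in the spirit of the sentence preceding the statement, I would simply invoke Lemma~13 of \cite{yaji2018stochastic}: that result states precisely that if $H_1$ is Marchaud, the attractor map $\Lambda(\cdot)$ (here $x \rightrightarrows G_x$) satisfies the compactness, linear-growth, and closed-graph conditions in Assumption (vii)(a)--(b), and $x \rightrightarrows G_x$ is itself Marchaud, then the composite map $\Hat{H}(x) = \cup_{y \in \Lambda(x)} H_1(x,y)$ is Marchaud. All of these hypotheses have been verified in the preceding lemmas: $H_1$ is Marchaud by Lemma~\ref{Marchaud}; $G_x = \partial f(x) + B(0,r(\lambda))$ is nonempty, convex, compact, and satisfies $\sup_{y \in G_x}\norm{y} \le G + r(\lambda)$, a uniform (hence linear-growth) bound; and $x \rightrightarrows G_x$ is Marchaud by the immediately preceding lemma. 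So the one-line proof is: the hypotheses of Lemma~13 of \cite{yaji2018stochastic} hold, and therefore $\Hat{H}_1$ is Marchaud.

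I would write the proof as a short hybrid: cite Lemma~13 of \cite{yaji2018stochastic} for the abstract conclusion, but also note the explicit decomposition $\Hat{H}_1(x) = -\partial f(x) - \Hat{\mathcal{N}}_\mathcal{X}(x) + B(0,r(\lambda))$ so the reader can see directly why convexity, compactness, and the growth bound $\sup_{z\in\Hat{H}_1(x)}\norm{z} \le G + r(\lambda) + 2\sup_{y\in G_x}\norm{y}$ hold over the compact set $\mathcal{X}$, and sketch the subsequence extraction argument for upper semicontinuity. The main obstacle — really the only subtlety — is making sure the linear-growth / boundedness bound on $\Hat{\mathcal{N}}_\mathcal{X}(x)$ is genuinely uniform here: in Proposition~\ref{Proposition 2} the constraint $\norm{\eta} \le 2\norm{y}$ depends on $y$, but once we pass to the attractor $y \in G_x$ we have $\norm{y} \le G + r(\lambda)$ uniformly over $x \in \mathcal{X}$, so $\Hat{\mathcal{N}}_\mathcal{X}(x)$ restricted to this regime is uniformly bounded; one must state this explicitly rather than leave it implicit. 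Everything else is routine bookkeeping with Minkowski sums of Marchaud maps, which is exactly why it is reasonable to defer to \cite{yaji2018stochastic} and keep the proof brief.
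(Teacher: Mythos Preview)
Your proposal is correct and aligns with the paper's approach: the paper gives no detailed proof at all, merely stating that the lemma ``is a consequence of Lemma~13 of \cite{yaji2018stochastic}.'' Your invocation of that same lemma, together with the explicit verification that its hypotheses hold (in particular the uniform bound on $\Hat{\mathcal{N}}_\mathcal{X}(x)$ once $y$ is restricted to the bounded attractor $G_x$), is exactly the intended argument, only made more explicit.
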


Having verified all the assumptions in \cite{yaji2020stochastic}, we can now invoke Theorem 5.8 from \cite{yaji2020stochastic} to establish that the continuous time interpolation $\Bar{x}(t)$ of ${x_n}, n\geq 0$, is an asymptotic pseudo-trajectory of the projected differential inclusion, as formalized in the next proposition.

\begin{proposition}
      Let $\Bar{x}(t)$ denote the continuous-time interpolation of the iterates generated by the projected subgradient method as given in \eqref{CTI}. Then $\Bar{x}(t)$ is an asymptotic pseudotrajectory (APT) of differential inclusion \begin{equation} \begin{split} \Dot{x} (t) \in - (\partial_x f(x) + \Hat{\mathcal{N}}_\mathcal{X}(x) + B(0,r(\lambda)).
    \end{split}
    \label{APT normal1}
\end{equation}
    
    That is, for any  $T > 0$, 
     \begin{equation*}
\lim\limits_{t \to \infty}  \sup\limits_{0 \leq s \leq T} \norm{\Bar{x}(t+s)-x(s)} = 0,
\end{equation*}
where $x(s)$ denotes the solution of the differential inclusion \eqref{APT normal1} with initial condition $x(0) = \Bar{x}(t)$.
\label{AST1p}
\end{proposition}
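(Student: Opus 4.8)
The plan is to invoke Theorem~\ref{AST11} directly, after confirming that the reformulation in Proposition~\ref{Proposition 2} together with the Marchaud properties established in Lemma~\ref{Marchaud} and the subsequent lemmas verify every hypothesis (i)--(vii) of Section~\ref{Basic Ingrdients} for the pair of maps $H_1, H_2$ arising from \eqref{mirror descent}. Concretely, I would first recall from Proposition~\ref{Proposition 2} that the iterates satisfy the two time-scale stochastic recursive inclusion \eqref{SRI} with $d_1 = d_2 = d$, where the $o(\alpha(n))$ term from the projection expansion is absorbed into the set-valued map $H_1$ (since $H_1(x,y) = -(y + \hat{\mathcal N}_{\mathcal X}(x))$ already contains a full cone of directions, the asymptotically negligible correction does not alter the limiting inclusion — this is the content of how Proposition~5.3.5 of \cite{hiriart2013convex} was used).

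Next I would tick off the assumptions one by one. Assumptions (i) and (ii) (Marchaud property of $H_1$ and $H_2$) are exactly Lemma~\ref{Marchaud}; note that the growth bounds there are in fact uniform because $\mathcal X$ is compact, so $\norm x$ is bounded along the iterates and the bias ball $B(0,r(\lambda))$ is fixed. Assumption (iii) on the step-sizes is Assumption~\ref{ass-stepsize}. Assumption (iv) requires the fast-timescale noise condition on $M_{n+1}$; here I note that the $x$-recursion carries no martingale term at all (the noise enters only through $y_n$ on the fast scale), so (iv) holds trivially with $M^{(1)}_{n+1} \equiv 0$, while (v) — the analogous condition on $M^{(2)}_{n+1} = M_{n+1}$ with the $\beta$-weights — follows from the same $L^2$-boundedness and martingale-convergence argument as in Corollary~\ref{Corolloary assumption on noise} (the only change is replacing $\alpha(m)$ by $\beta(m)$, and $\sum \beta(m)^2 < \infty$ by Assumption~\ref{ass-stepsize}(d)). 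Assumption (vi), stability of the iterates, is immediate: $x_n \in \mathcal X$ which is compact, and $y_n$ is a convex combination $y_{n+1} = (1-\beta(n))y_n + \beta(n)\tilde g(n)$ of bounded-in-$L^2$ quantities, giving almost-sure boundedness of $\sup_n \norm{y_n}$ by a standard Borel–Cantelli / martingale argument.

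The core of the verification is Assumption (vii), which demands identifying the global attractor $\Lambda(x)$ of the fast-timescale inclusion \eqref{fast time scale1} and checking its regularity. This is supplied by Lemma~\ref{lema fast time scale}: for each fixed $x$ the attractor is $\Lambda(x) = G_x = \partial f(x) + B(0,r(\lambda))$, which is compact with $\sup_{y \in G_x}\norm y \le G + r(\lambda)$ (again uniform over the compact $\mathcal X$, so the linear-growth bound (vii)(a) holds trivially), and its closed-graph property (vii)(b) is the content of the lemma stating $x \rightrightarrows G_x$ is Marchaud. Finally, $\hat H = \hat H_1$ is defined as $\cup_{y \in G_x} H_1(x,y) = -\partial f(x) - \hat{\mathcal N}_{\mathcal X}(x) + B(0,r(\lambda))$ and is Marchaud by the last lemma (consequence of Lemma~13 of \cite{yaji2018stochastic}). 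With all of (i)--(vii) in place, Theorem~\ref{AST11} applies verbatim and yields precisely that $\bar x(t)$ is an APT of $\dot x(t) \in \hat H_1(x(t)) = -(\partial_x f(x) + \hat{\mathcal N}_{\mathcal X}(x) + B(0,r(\lambda)))$, which is \eqref{APT normal1}, and the displayed limit $\lim_{t\to\infty}\sup_{0\le s\le T}\norm{\bar x(t+s) - x(s)} = 0$ follows.

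The main obstacle I anticipate is the rigorous treatment of the $o(\alpha(n))$ remainder from Proposition~\ref{PDS}: one must argue that the perturbed recursion $x_{n+1} - x_n \in \alpha(n)H_1(x_n,y_n) + o(\alpha(n))$ has the same limiting differential inclusion as the clean one. The cleanest route is to observe that the $o(\alpha(n))$ term, being $\sum_n o(\alpha(n))$ not necessarily summable, must be controlled by showing $\norm{\text{remainder}}/\alpha(n) \to 0$ uniformly (using that $y_n$ is bounded, so the argument $-\alpha(n)y_n$ of the projection has norm $O(\alpha(n))$ and the second-order Taylor remainder of the projection is genuinely $o(\alpha(n))$), and then folding it into the martingale-plus-bias error structure that Theorem~3 of \cite{yaji2020stochastic} already tolerates — i.e., it contributes an additional asymptotically negligible term to the "$\sum \alpha(m)(\text{error})_m \to 0$ on $[n,\tau^1(n,T)]$" bound rather than perturbing $H_1$. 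A secondary, more bookkeeping-level obstacle is making sure the filtration $\mathcal F_n$ used for the martingale $M_{n+1}$ in the Corollary following Theorem~\ref{property} is consistent with the one in Assumption~(iv), which requires $\mathcal F_n$ to measure $x_m, y_m, M^{(1)}_m, M^{(2)}_m$ for $m \le n$ — this holds since $y_n$ is $\mathcal F_n$-measurable by construction and $M^{(1)} \equiv 0$.
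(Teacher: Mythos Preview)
Your proposal is correct and follows exactly the route the paper takes: the paper states Proposition~\ref{AST1p} without an explicit proof, having just remarked that ``having verified all the assumptions in Section~\ref{Basic Ingrdients}, we can now invoke Theorem~\ref{AST11}'' via Proposition~\ref{Proposition 2}, Lemma~\ref{Marchaud}, Corollary~\ref{Corolloary assumption on noise}, Lemma~\ref{lema fast time scale}, and the two subsequent Marchaud lemmas. Your checklist is in fact more careful than the paper's own summary (in particular your observation that $M^{(1)}\equiv 0$ so (iv) is trivial, and that Corollary~\ref{Corolloary assumption on noise} must be rerun with $\beta(m)$ for (v)), and the two obstacles you flag---the $o(\alpha(n))$ remainder and filtration bookkeeping---are genuine loose ends that the paper simply does not address.
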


\subsection{Almost Sure Convergence of Iterates }
\label{subsectionalmsot}
In the final part, we use Proposition~\ref{AST1p} to study the asymptotic behavior of ${x_n}$, beginning with a lemma on the stationarity of local minimizers.
\begin{lemma}
    Let $x^\ast$ be a local minimum of the optimization problem \eqref{eq:main_problem}. Then $x^\ast$ belongs to the set of stationary points $\mathcal{S}$, defined as
    \begin{equation*}
    \mathcal{S} = \{ x \in \mathcal{X} \; \; | \exists \;  \zeta \in \partial f(x) \; \text{s.t.} \; \; \inprod{\zeta}{y-x} \geq 0 \; \; \forall \; y \in \mathcal{X} \}. 
\end{equation*}
   Moreover, any point in $\mathcal{S}$ is a Clarke stationary point.
\end{lemma}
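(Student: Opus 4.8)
The plan is to prove the two assertions separately. For the first claim, that a local minimizer $x^\ast$ lies in $\mathcal{S}$, I would start from the definition of local minimality: there is a neighborhood $U$ of $x^\ast$ such that $f(x^\ast) \le f(y)$ for all $y \in \mathcal{X} \cap U$. Fix any $y \in \mathcal{X}$ and, using convexity of $\mathcal{X}$, consider the feasible segment $x^\ast + t(y - x^\ast) \in \mathcal{X}$ for $t \in [0,1]$; for $t$ small enough this point lies in $U$, so $f(x^\ast + t(y-x^\ast)) - f(x^\ast) \ge 0$. Dividing by $t$ and letting $t \downarrow 0$, the Clarke regularity assumption (Assumption~\ref{Clarke-Regular}) guarantees the one-sided directional derivative exists, so $f'(x^\ast; y - x^\ast) \ge 0$.

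Next I would convert this directional-derivative inequality into the variational inequality defining $\mathcal{S}$. By the variational characterization of the Clarke subdifferential quoted in Section~\ref{Basic Ingrdients}, $f'(x^\ast; v) = \max_{\zeta \in \partial f(x^\ast)} \inprod{\zeta}{v}$ (this max-representation holds because $\partial_C f(x^\ast)$ is nonempty, convex and compact, and $f'(x^\ast;\cdot)$ is its support function — here regularity is again needed to identify $f'$ with the Clarke generalized directional derivative $f^0$). Hence for each $y \in \mathcal{X}$ there exists $\zeta_y \in \partial f(x^\ast)$ with $\inprod{\zeta_y}{y - x^\ast} \ge 0$. To get a \emph{single} $\zeta$ that works for all $y$ simultaneously — which is what the definition of $\mathcal{S}$ demands — I would invoke a minimax argument: consider $\min_{\zeta \in \partial f(x^\ast)} \max_{y \in \mathcal{X}} \inprod{\zeta}{x^\ast - y}$ (replace $\mathcal{X}$ by a compact truncation or note the relevant quantity only depends on directions into $\mathcal{X}$), apply Sion's minimax theorem using compactness and convexity of $\partial f(x^\ast)$ and convexity of $\mathcal{X}$, and conclude that the value is $\le 0$; the minimizing $\zeta^\ast$ is then the desired subgradient with $\inprod{\zeta^\ast}{y - x^\ast} \ge 0$ for all $y \in \mathcal{X}$. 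Therefore $x^\ast \in \mathcal{S}$.

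For the second assertion, that every point of $\mathcal{S}$ is a Clarke stationary point, I would recall that Clarke stationarity for the constrained problem means $0 \in \partial_C f(x) + \mathcal{N}_\mathcal{X}(x)$. Take $x \in \mathcal{S}$ with witnessing $\zeta \in \partial f(x)$ satisfying $\inprod{\zeta}{y - x} \ge 0$ for all $y \in \mathcal{X}$. This says $\inprod{-\zeta}{y - x} \le 0$ for all $y \in \mathcal{X}$, which is precisely the definition of $-\zeta \in \mathcal{N}_\mathcal{X}(x)$. Hence $0 = \zeta + (-\zeta) \in \partial_C f(x) + \mathcal{N}_\mathcal{X}(x)$, so $x$ is Clarke stationary.

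The main obstacle is the step in the first part going from ``for each direction there is some subgradient certifying nonnegativity'' to ``there is one uniform subgradient'': this is exactly where the minimax/compactness machinery is essential, and one must be careful that Sion's theorem applies (the payoff $(\zeta, y) \mapsto \inprod{\zeta}{y - x^\ast}$ is bilinear hence concave-convex, $\partial f(x^\ast)$ is compact convex, $\mathcal{X}$ is convex; if $\mathcal{X}$ is unbounded one restricts to $\mathcal{X} \cap \bar B(x^\ast, 1)$ without changing the set of feasible directions near $x^\ast$). Everything else — the truncation of the segment into the neighborhood $U$, the identification $f' = f^0$ under regularity, and the normal-cone rewriting in the second part — is routine given the definitions and assumptions already in place.
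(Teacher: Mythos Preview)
Your argument follows the same opening as the paper's own proof: take a feasible convex combination $x^\ast + t(y - x^\ast)$, use local minimality for small $t$, divide by $t$, and pass to the limit to obtain $f'(x^\ast; y - x^\ast) \ge 0$ for every $y \in \mathcal{X}$. The paper then simply asserts ``from the definition of the subdifferential, $\exists\,\zeta \in \partial f(x^\ast)$ such that $\inprod{\zeta}{y-x^\ast} \ge 0$'' and stops, without justifying the quantifier swap from ``for each $y$ some $\zeta_y$ works'' to ``a single $\zeta$ works for all $y$.'' You correctly flag this as the only nontrivial step and resolve it with a minimax argument (Sion's theorem on the bilinear payoff over the compact convex set $\partial f(x^\ast)$ and the convex set $\mathcal{X}$). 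That is a genuine improvement in rigor over the paper's presentation. Note that in this paper $\mathcal{X}$ is already assumed compact, so your truncation caveat is unnecessary, and Sion applies directly.

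For the second assertion, your rewriting of $\inprod{\zeta}{y-x} \ge 0$ for all $y \in \mathcal{X}$ as $-\zeta \in \mathcal{N}_\mathcal{X}(x)$, hence $0 \in \partial_C f(x) + \mathcal{N}_\mathcal{X}(x)$, is exactly the right argument. The paper's proof does not treat this claim at all, so here too your proposal is more complete than the original.
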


\begin{proof}
This proof follows directly from Theorem 8.15 in \cite{rockafellar1998variational}. Since $f$ is Lipschitz continuous, its horizon subdifferential satisfies $\partial^\infty f(x)=\{0\}$ by Theorem 9.13 of \cite{rockafellar1998variational}. Hence, the required constraint qualification holds, and the result follows directly from Theorem 8.15.
\end{proof}

We have established in Proposition \ref{AST1p} that the continuous-time interpolation of ${x_n,n\geq 0},$ behaves as an asymptotic pseudo-trajectory of the projected differential inclusion with a disturbance term. We now show that, in the absence of disturbance (i.e., when $r(\lambda) = 0$), every Carathéodory solution of the projected differential inclusion converges asymptotically to the set $\mathcal{S}$.

\begin{theorem}
Consider the projected differential inclusion
\begin{equation}
    \Dot{x}(t)
    \in
    \mathcal{P}_{\mathcal{T}_{\mathcal{X}}(x(t))}
    \bigl(-\partial f(x(t))\bigr).
    \label{PDStheorem}
\end{equation}

Then, any Carath\'eodory solution of \eqref{APT normal1} with $r(\lambda)=0$ is also a Carath\'eodory solution of \eqref{PDStheorem}, and vice versa. Moreover, the set of equilibrium points of \eqref{PDStheorem} coincides with the set $\mathcal{S}$.

Furthermore, every equilibrium point is Lyapunov stable, and every Carath\'eodory solution of \eqref{PDStheorem} converges to the set $\mathcal{S}$.
\label{stability1}
\end{theorem}
\begin{proof}
\color{black}
The first statement, namely that every Carath\'eodory solution of \eqref{APT normal1} is also a Carath\'eodory solution of \eqref{PDStheorem}, and vice versa, can be proved using arguments analogous to those in Corollary 5.5 of \cite{Doffler}. Hence, for the sake of brevity, we omit the details.
\color{black}

    Suppose  $ 0 \in \mathcal{P}_{\mathcal{T}_\mathcal{X}(x)} (- \partial f(x))$, then $\exists \; g \in \partial f(x) $ such that $ 0 = \mathcal{P}_{\mathcal{T}_\mathcal{X}(x)} (- g)$. 
   By Moreau’s decomposition theorem, we have  $ -g = \mathcal{P}_{\mathcal{N}_\mathcal{X}(x)} (- g)$
which implies $- g \in \mathcal{N}_\mathcal{X}(x)$. 
By the definition of the normal cone, it follows that
\\ $ \inprod{- g}{y-x} \leq 0, \; \;  \forall \; y \in \mathcal{X}, $
which shows that $x \in \mathcal{S}$. We can prove the converse in exactly a similar way. 
We now show that any Carathéodory solution of the projected differential inclusion converges to the set $\mathcal{S}$. To this end, consider the Lyapunov function
\begin{equation*}
    V(x) = f(x)  - f^\ast,
\end{equation*}
where $f^\ast = \min\limits_{x \in \mathcal{X}} f(x)$. Let $  F(x) = \mathcal{P}_{\mathcal{T}_\mathcal{X}(x)} (- \partial f(x)).$
We consider the set-valued Lie derivative of 
$V$ with respect to $F$, defined as 
 \begin{equation*}
        \Tilde{L}_F V(x) = \{a \in \mathbb{R} \;  | \;  \exists \;  v \in F(x) \;  \text{s.t.} \;  a = \zeta^\top v, \;  \forall \; \zeta \in \partial f(x) \}.
    \end{equation*}
    Let $a \in   \Tilde{L}_F V(x)$. Then, by definition,  $\exists \; \nu \in F(x)$, such that 
    $   a = \zeta^\top \nu, \; \; \forall \; \zeta \in \partial f(x)$.
    Since $\nu \in F(x)$,  there exists $g \in \partial f(x)$ such that 
    \begin{equation*}
    \begin{split}
         \nu & = \mathcal{P}_{\mathcal{T}_\mathcal{X}(x)} (-g)
          = \argmin\limits_{\nu_1 \in \mathcal{T}_\mathcal{X}(x)} \; \; \norm{\nu_1+ g}^2.
         \end{split}
    \end{equation*}

Since tangent cone is a closed convex cone, we have $0 \in \mathcal{T}_\mathcal{X}(x)$, and hence
$  \norm{\nu + g}^2 \leq \norm{g}^2.$
This yields $a = \inprod{\nu}{g} \leq - \frac{1}{2} \norm{\nu}^2.$
In other words, $ \sup\limits_{a \in \Tilde{L}_F V(x)}  \; \; \{a \}  \; \leq 0.$
By invoking Proposition 10 of \cite{Cortes}, it follows that $  \frac{d}{dt} f(x(t)) \leq 0 \; \; \quad \text{for almost all} \; t \in [0,\infty),$
which implies that the function 
$f(x(t))$ is non-increasing almost everywhere along Carathéodory solutions.

Since $\mathcal{X}$ is assumed to be compact and positively invariant under the dynamics, we apply Theorem 4 of \cite{Cortes} to conclude that every Carathéodory solution of \eqref{PDStheorem} with initial condition in  $\mathcal{X}$ asymptotically converges to the largest weakly invariant set contained in  
\\ $\mathcal{X} \cap \{x \in \mathbb{R}^d \; \; |  \; \;  0 \in \Tilde{L}_F V(x) \}.$

Now, if $0 \in  \Tilde{L}_F V(x)$, then there exists $  \nu \in F(x)$ such that 
\\ $ 0 = \nu^\top \zeta,  \; \quad \forall \; \zeta \in \partial f(x)$. 
Since $\nu \in F(x)$,  by definition, there exists $g \in \partial f(x)$ such that 
\begin{equation}
    \nu = \mathcal{P}_{\mathcal{T}_\mathcal{X}(x)} (-g). 
\end{equation}

Using Moreau’s decomposition theorem, this implies $- g \in \mathcal{N}_\mathcal{X}(x),$
which shows that $ x \in \mathcal{S}$. Therefore, the largest invariant set contained in $ \mathcal{X} \cap \{x \in \mathbb{R}^d \; \; |  \; \;  0 \in \Tilde{L}_F V(x) \}$ is a subset of $\mathcal{S}$, and the claim follows.   
\end{proof}
\color{black}
In the remaining part of the paper, we establish the almost sure convergence of the iterates for two different choices of the smoothing parameter. First, for a constant smoothing parameter $\lambda$, we show that the iterates almost surely converge to a neighborhood of the set of Clarke stationary points $\mathcal{S}$. Next, for a diminishing smoothing parameter sequence $\{\lambda_n\}$ satisfying Assumption \ref{Assumptionregardingsmoothing}, we show that the iterates almost surely converge to the set $\mathcal{S}$ itself.
\color{black}
\subsubsection{Almost Sure Convergence for Constant Smoothing Parameter}
The next theorem combines Proposition \ref{AST1p} and Theorem \ref{stability1} with robust stability analysis to show that ${x_n,n\geq 0},$ almost surely converge to a neighborhood of $\mathcal{S}$, whose size is governed by $\lambda$.
\begin{theorem}
  \textcolor{black}{ Let Assumptions \ref{assumpvari}, \ref{ass-stepsize} and \ref{monotone} hold, and let $\epsilon>0$ be arbitrary.} Then there exists $\lambda_0 > 0$ and $n_0 \in \mathbb{N}$ such that for all $\lambda \leq \lambda_0$, 
 the iterates $x_n, n\geq n_0$,	
  lie within the $\epsilon$-neighborhood of the set of stationary points $\mathcal{S}$, i.e., $x_n \in \mathrm{N}_\epsilon (\mathcal{S})$.
  \label{main Theorem1}
\end{theorem}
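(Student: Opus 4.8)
The plan is to combine the asymptotic pseudo-trajectory property established in Proposition~\ref{AST1p} with the convergence of the unperturbed projected differential inclusion from Theorem~\ref{stability1}, via a robust perturbation (converse-Lyapunov / Gronwall) argument. The key observation is that the APT in \eqref{APT normal1} is governed by $\Dot{x}(t) \in -(\partial_x f(x) + \Hat{\mathcal{N}}_\mathcal{X}(x)) + B(0,r(\lambda))$, which is an $r(\lambda)$-perturbation of the limiting system \eqref{PDS} whose solutions are shown in Theorem~\ref{stability1} to converge to $\mathcal{S}$. Since $r(\lambda) \to 0$ as $\lambda \to 0$ by Lemma~\ref{conv}, the perturbed flow must stay within a $\rho(\lambda)$-neighborhood of $\mathcal{S}$ for some $\rho(\lambda) \to 0$, and the APT property transfers this to the iterates.

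Concretely, I would carry out the following steps. \textbf{Step 1 (robustness of the limiting system).} Using the Lyapunov function $V(x) = f(x) - f^\ast$ from Theorem~\ref{stability1}, which is continuous on the compact set $\mathcal{X}$ and strictly decreasing along solutions of \eqref{PDS} outside $\mathcal{S}$, invoke a converse Lyapunov theorem for differential inclusions (e.g., in the spirit of \cite{Goebeldoi:10.1137/1.9781611977981.ch6}) to obtain a smooth/strict Lyapunov function and, consequently, semiglobal practical asymptotic stability under perturbations: for every $\epsilon > 0$ there is a $\delta > 0$ such that every Carathéodory solution of the $\delta$-perturbed inclusion $\Dot{x} \in \mathcal{P}_{\mathcal{T}_\mathcal{X}(x)}(-\partial f(x)) + B(0,\delta)$ starting in $\mathcal{X}$ eventually enters and remains in $\mathrm{N}_{\epsilon/2}(\mathcal{S})$. \textbf{Step 2 (reconcile the two forms of the inclusion).} Verify that near $\mathcal{X}$ the right-hand side $-(\partial_x f(x) + \Hat{\mathcal{N}}_\mathcal{X}(x)) + B(0,r(\lambda))$ used in Proposition~\ref{AST1p} agrees (after the $o(\alpha(n))$ book-keeping in the derivation preceding Proposition~\ref{Proposition 2}) with an $r(\lambda)$-perturbation of $\mathcal{P}_{\mathcal{T}_\mathcal{X}(x)}(-\partial f(x))$, using Proposition~\ref{PDS} and Moreau's decomposition; this is why the normal-cone truncation and the projected form describe the same attracting behavior. \textbf{Step 3 (transfer via APT).} Fix $\lambda_0$ so small that $r(\lambda_0) \leq \delta$ and $\rho(\lambda_0) \leq \epsilon/2$, where $\rho(\lambda)$ is the neighborhood size from Step 1. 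By Proposition~\ref{AST1p}, $\Bar{x}(t)$ is an APT of \eqref{APT normal1}: for any $T$, $\sup_{0\le s\le T}\|\Bar{x}(t+s) - x(s)\| \to 0$, where $x(\cdot)$ solves \eqref{APT normal1} from $x(0) = \Bar{x}(t)$. Choosing $T$ large enough that every such solution is within $\epsilon/2$ of $\mathcal{S}$ by time $T$ (uniformly in the initial condition, using compactness of $\mathcal{X}$ and Step 1), and $t$ large enough that the APT gap is below $\epsilon/2$, we conclude $\Bar{x}(t+T) \in \mathrm{N}_\epsilon(\mathcal{S})$; since this holds for all large $t$, and the iterates are a subsequence of the interpolation, there is $n_0$ with $x_n \in \mathrm{N}_\epsilon(\mathcal{S})$ for all $n \geq n_0$. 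This holds almost surely because the APT conclusion of Proposition~\ref{AST1p} holds on a probability-one event (the stability Assumption (vi) and the noise condition of Corollary~\ref{Corolloary assumption on noise}).

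The main obstacle I anticipate is \textbf{Step 1} — making the practical-stability / robustness statement rigorous for a \emph{differential inclusion} with a nonsmooth, merely Clarke-regular $f$ and a set-valued normal-cone term. The Lyapunov function $f - f^\ast$ is only as regular as $f$, so one cannot directly differentiate it; one must work with the set-valued Lie derivative $\Tilde{L}_F V$ as in Theorem~\ref{stability1} and quantify \emph{how negative} it is at distance $\geq \epsilon$ from $\mathcal{S}$ (a uniform bound $\sup \Tilde{L}_F V(x) \leq -c(\epsilon) < 0$ on $\mathcal{X}\setminus \mathrm{N}_\epsilon(\mathcal{S})$, which requires an argument since $f$ need not be coercive or have isolated stationarity). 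Then a Gronwall-type estimate controls how much the $r(\lambda)$-perturbation can increase $V$ per unit time, yielding that $V$ decreases until the trajectory reaches a sublevel set contained in $\mathrm{N}_\epsilon(\mathcal{S})$; converting a sublevel set of $f$ near $\mathcal{S}$ into an actual $\epsilon$-neighborhood of $\mathcal{S}$ again uses continuity of $f$ and compactness. A secondary subtlety is that the interpolation timescale $t(n) = \sum \alpha(k)$ is the slow one, so the APT statement is genuinely about $\Bar{x}$ and the identification $x_n = \Bar{x}(t(n))$ must be invoked to land the conclusion on the discrete iterates.
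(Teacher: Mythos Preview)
Your overall architecture---invoke the APT property of Proposition~\ref{AST1p}, establish robustness of the unperturbed flow of Theorem~\ref{stability1} under an $r(\lambda)$-perturbation, then splice via a triangle inequality---is exactly the paper's strategy. The difference lies in how the robustness step (your Step~1) is carried out. You propose a converse-Lyapunov route: quantify strict negativity of the set-valued Lie derivative of $V=f-f^\ast$ outside $\mathrm{N}_\epsilon(\mathcal{S})$ and argue the perturbation cannot overcome it. As you correctly anticipate, this is delicate because $V$ is only as regular as $f$, and converting sublevel sets of $f$ into metric neighborhoods of $\mathcal{S}$ is not automatic.

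The paper sidesteps this obstacle entirely by a direct trajectory comparison rather than a Lyapunov-level argument. It lets $x_t(\cdot)$ solve the perturbed inclusion and $x_t^1(\cdot)$ the unperturbed one, both from $\bar{x}(t)$, and bounds $\tfrac{d}{ds}\norm{x_t(s)-x_t^1(s)}^2$. The key ingredient you did not invoke is Assumption~\ref{monotone} (the one-sided Lipschitz condition on $\partial f$): together with monotonicity of the normal-cone operator it yields $\tfrac{d}{ds}\norm{x_t(s)-x_t^1(s)}^2 \le L\norm{x_t(s)-x_t^1(s)}^2 + r(\lambda)\,D$, and Gronwall over $[0,T_0]$ gives $\norm{x_t(T_0)-x_t^1(T_0)}^2 \le r(\lambda)\,D\,T_0\,e^{LT_0}$, made $\le(\epsilon/3)^2$ by choosing $\lambda$ small. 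Since $x_t^1(T_0)\in\mathrm{N}_{\epsilon/3}(\mathcal{S})$ by Theorem~\ref{stability1} (with $T_0$ uniform over the compact $\mathcal{X}$) and the APT gap is $\le\epsilon/3$ for large $t$, a three-term triangle inequality finishes. So your proof would work in principle but is harder than necessary; the paper's route is more elementary precisely because it exploits Assumption~\ref{monotone}, which your outline never uses.
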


\begin{proof}
    Let $\Bar{x}(t)$ denote the continuous-time interpolation of the iterates $x_n,n\geq 0$, as defined in \eqref{CTI}. From Proposition \ref{APT normal1}, it follows that for any $T > 0$, we have 
    \begin{equation}
\lim\limits_{t \to \infty}  \sup\limits_{0 \leq s \leq T} \norm{\Bar{x}(t+s)-x_t(s)} = 0,
\label{AST1}
\end{equation}
where $x_t(s)$ is the solution of the differential inclusion 
 \begin{equation}
    \begin{split}
        \Dot{x}(t) \in - (\partial f(x) + \mathcal{N}_\mathcal{X}(x) + B(0,r(\lambda))),
    \end{split}
    \label{APT normal11}
\end{equation}
with initial condition $x_t(0) = \Bar{x}(t)$. 

The goal of the proof is to show that $\exists \; t^0$ such that $\forall \; t \geq t^0$, we have $\Bar{x}(t) \in \mathrm{N}_\epsilon (\mathcal{S})$. From the construction of $\Bar{x}(t)$ in \eqref{CTI}, this in turn establishes the conclusion of Theorem~\ref{main Theorem1}.

From \eqref{AST1}, $\exists \; t_0 > 0$ such that $\forall \; t \geq t_0$, we have 

\begin{equation}
    \sup\limits_{0 \leq s \leq T} \norm{\Bar{x}(t+s)-x_t(s)} \leq \frac{\epsilon}{3}.
    \label{sup}
\end{equation}

Now, consider the solution $x_t^1 (s),s\geq 0$ of the differential inclusion \eqref{APT normal11} with the same initial condition $x_t^1(0) = \Bar{x}(t)$,  but with the perturbation term removed, i.e., $r(\lambda) = 0$. Then, by Theorem~\ref{stability1}, $\exists \; T_0>0$ such that $x_t^1(T_0) \in \mathrm{N}_{\frac{\epsilon}{3}} (\mathcal{S})$. 

Moreover, since the constraint set $\mathcal{X}$ is compact, this time $T_0$ can be chosen uniformly for all initial points $\Bar{x}(t) \in \mathcal{X}$, and hence is independent of $t$. 

Consider the following term.
\begin{equation*}
       \begin{split}
          &   \frac{d}{ds}(\norm{x_t(s)-x_t^1(s)}^2)
            =  ( (x_t(s)-x_t^1(s))^\top (\Dot{x}_t(s)-\Dot{x}_t^1(s))) 
           \\ \overset{(a)}= &  ( (x_t(s)-x_t^1(s))^\top (-g_t(s) - \eta_t(s) + b(s) + g_t^1(s) + \eta_t^1(s)))
           \\ \overset{(b)}\leq & L \norm{x_t(s)-x_t^1(s)}^2 + r(\lambda) D.
       \end{split}
       \label{10000}
   \end{equation*}
In step (a), we use $g_t(s) \in \partial f(x_t(s))$, $g_t^1(s) \in \partial f(x_t^1(s))$, $\eta_t(s) \in \mathcal{N}\mathcal{X}(x_t(s))$, and $\eta_t^1(s) \in \mathcal{N}\mathcal{X}(x_t^1(s))$, with the perturbation $b(s)$ satisfying $|b(s)| \leq r(\lambda)$. Step (b) follows from Assumption \ref{monotone} and the normal cone definition, and the constraint set has diameter $D$. Thus, for any $0 \leq s \leq T_0$,
    \begin{equation*}
        \norm{x_t(s)-x_t^1(s)}^2 \leq L \int\limits_{0}^{s}  \norm{x_t(z)-x_t^1(z)}^2 dz + r(\lambda) D T_0.
    \end{equation*}
Applying Grönwall’s inequality yields
\begin{equation*}
    \norm{x_t(s)-x_t^1(s)}^2 \leq r(\lambda) D T_0 \exp{ (L T_0)}.
\end{equation*}

Since $\lim\limits_{\lambda \to 0} r(\lambda) = 0$, it follows that  $\exists \; \lambda_0$ such that $\forall \; \lambda \leq \lambda_0$,  

\begin{equation*}
    \norm{x_t(s)-x_t^1(s)} \leq \frac{\epsilon}{3}.
\end{equation*}

From \eqref{sup}, it follows that $\exists \; t_0$ such that $\forall \; t \geq t_0$, 

\begin{equation*}
  \norm{  \Bar{x}(t+T_0) - x_t(T_0) } \leq \frac{\epsilon}{3}.
\end{equation*}

Moreover, we have already established that 
\begin{equation*}
    \norm{x_t(T_0) - x_t^1(T_0)} \leq \frac{\epsilon}{3}, \; \; \text{and} \; \;  x_t^1(T_0) \in \mathrm{N}_{\frac{\epsilon}{3}} (\mathcal{S}).
\end{equation*}

Combining these three facts via the triangle inequality yields that $\exists \; t_0>0$ such that $\forall \; t \geq t_0$ we have $\Bar{x}(t+T_0) \in \mathrm{N}_\epsilon (\mathcal{S})$.
\end{proof}

\begin{remark}
Theorem \ref{main Theorem1} extends the asymptotic convergence result of \cite{Ramaswami} from smooth to non-smooth objective functions by showing convergence of the iterates to a neighborhood of the set of Clarke stationary points. A natural next step is to identify noise conditions under which the iterates avoid saddle points, analogous to the results in \cite{pemantle1990nonconvergence} for standard stochastic approximation algorithms.
\end{remark}
%{\color{black}{See if a final remark at least commenting on the main result can be added.}}
\subsubsection{Almost Sure Convergence for Diminishing Smoothing Parameter}

\color{black}
\begin{theorem}
Under Assumptions \ref{assumpvari}--\ref{Assumptionregardingsmoothing}, the iterates $\{x_n\}$ almost surely converge to the set of Clarke stationary points $\mathcal{S}$.
\label{Thmdiminsihing}
\end{theorem}

\begin{proof}
The proof follows from the two-timescale stochastic approximation framework of \cite{yaji2020stochastic}. For diminishing $\lambda_n$, the Marchaud property of $H_1$ and $H_2$ in Proposition \ref{Proposition 2} follows by arguments similar to Lemma \ref{Marchaud}. Further, the summability of the noise sequence $\sum_{n\geq 0} \beta(n) M_{n+1}$ was established in Corollary \ref{Corolloary assumption on noise}. Hence, all assumptions of Theorem 5.9 in \cite{yaji2020stochastic} are satisfied.

We now analyze the fast and slow timescale iterates separately.

\textbf{Step 1: Fast Time Scale Iterates.}
We first treat the slow timescale iterates $x_n$ as quasi-static, i.e., $x_n=x$. In this case, the asymptotic behavior of the fast timescale iterates $\{y_n\}$ is governed by the differential inclusion: $\Dot{y}(t) \in -y(t) + \partial f(x).$
Here, the bias term $r(\lambda_n)$ is neglected since $r(\lambda_n)\to 0$, and its contribution does not affect the asymptotic analysis; see the discussion on page 18 of \cite{borkar2008stochastic}.

Hence, using arguments similar to those in Lemma \ref{lema fast time scale}, we conclude that for every fixed $x_n=x$, $ y_n \to G_x = \partial f(x).$

\textbf{Step 2: Slow Time Scale Iterates.}
Using the standard two-timescale stochastic approximation results from \cite{yaji2020stochastic}, the asymptotic behavior of the slow timescale iterates $\{x_n\}$ is described by the following differential inclusion:
\begin{equation}
    \begin{split}
        \Dot{x}(t)
        &\in \Hat{H}_1(x)
        =
        \bigcup_{y\in G_x} H_1(x,y)
        \\
        &=
        -\partial f(x)-\Hat{\mathcal{N}}_{\mathcal{X}}(x).
    \end{split}
    \label{sow}
\end{equation}

Finally, Theorem \ref{stability1} shows that every Carath\'eodory solution of \eqref{sow}, corresponding to any initial condition $x_0\in\mathcal{X}$, asymptotically converges to the set $\mathcal{S}$. Therefore, the conclusion follows directly from Theorem 5.9 of \cite{yaji2020stochastic}.
\end{proof}
\begin{remark}
Theorem \ref{Thmdiminsihing} establishes almost sure convergence to the set of Clarke stationary points $\mathcal{S}$ under $\lambda_n \to 0$ without requiring Assumption \ref{monotone}. In contrast, a constant smoothing parameter $\lambda>0$ yields uniformly bounded variance, allowing larger step sizes and often faster practical convergence (to a neighborhood of $\mathcal{S})$, but requires the additional one-sided Lipschitz assumption in Assumption \ref{monotone}. A similar bias--variance trade-off has been discussed in Chapter 4.4 of \cite{haykin2009neural} for continuously differentiable functions. However, a detailed investigation of this trade-off in our setting would require a comprehensive finite-time analysis of the proposed algorithm. Deriving such finite-time guarantees under the sole assumption of Lipschitz continuity, without imposing additional regularity conditions, is a highly nontrivial problem \cite{zhang2020complexity}. Consequently, the present manuscript focuses on asymptotic convergence properties. Developing finite-time performance bounds under stronger assumptions, such as weak convexity or other suitable regularity conditions, and using these bounds to characterize the bias--variance trade-off more precisely, constitutes an interesting direction for future research.
\end{remark}
\color{black}

\section{Numerical Simulation}

To empirically validate the proposed two time-scale zeroth-order projected stochastic subgradient method, we consider a robust parameter estimation problem modeled using a single-layer nonlinear network.

We first generate a standard Gaussian random vector of $d$-dimensional $x \sim \mathcal{N}(0,I_d)$ and define $x^\ast = \frac{x}{\norm{x}},$
so that $x^\ast$ lies inside the unit ball. Next, we generate the input feature vectors $\{u_i\}_{i=1}^N$, where each $u_i$ is independently sampled from the standard Gaussian distribution. The corresponding target values $\{y_i\}_{i=1}^N$ are generated according to the
\begin{equation*}
    y_i = \tanh\big((x^\ast)^\top u_i\big) + \zeta,
\end{equation*}
where $\zeta \sim \mathcal{N}(0,1)$ denotes the observation noise.

Given the dataset $\{(u_i,y_i)\}_{i=1}^N$, where $u_i \in \mathbb{R}^d$ and $y_i \in \mathbb{R}$, our objective is to minimize the empirical risk associated with the absolute deviation loss. The resulting optimization problem is
\begin{equation}
    \min_{x\in\mathcal{X}}
    f(x)
    =
    \frac{1}{N}
    \sum_{i=1}^N
    \left| y_i - \tanh(x^\top u_i) \right|,
    \label{eq:sim_objective}
\end{equation}
where $x \in \mathbb{R}^d$ is the optimization variable and the constraint set is given by
\begin{equation}
    \mathcal{X}
    =
    \left\{
    x \in \mathbb{R}^d
    \mid
    \norm{x}_2 \leq R
    \right\}.
\end{equation}

The objective function in \eqref{eq:sim_objective} satisfies the assumptions required in the proposed framework. In particular, the objective is non-convex and non-smooth since it is composed of the nonlinear function $\tanh(\cdot)$ and the non-smooth absolute value function $|\cdot|$. Moreover, it can be verified that $f(x)$ satisfies Assumption 4 of the manuscript.

To solve \eqref{eq:sim_objective} without access to explicit gradient information, we employ the proposed two time-scale zeroth-order projected stochastic subgradient method. At iteration $n \geq 0$, we sample a mini-batch $\mathcal{B}_n \subset \{1,\dots,N\}$ and define the stochastic oracle
\[
F(x,\zeta_n)
=
\frac{1}{|\mathcal{B}_n|}
\sum_{i\in\mathcal{B}_n}
\left|
y_i-\tanh(x^\top u_i)
\right|,
\]
where $\zeta_n$ represents the randomness induced by the mini-batch selection.

To approximate an element of the Clarke subdifferential, we use Gaussian smoothing. Let $\{U_n\}$ be a sequence of independent Gaussian random vectors with $U_n \sim \mathcal{N}(0,I_d)$, and let $\lambda>0$ denote the smoothing parameter. The approximated subgradient $\tilde{g}(n)$ is computed using the central difference estimator
\begin{equation}
    \tilde{g}(n)
    =
    \left[
    \frac{
    F(x_n+\lambda U_n,\zeta_n^1)
    -
    F(x_n-\lambda U_n,\zeta_n^2)
    }{2\lambda}
    \right]
    U_n.
    \label{eq:subgrad_approx}
\end{equation}

The updates for the proposed two time-scale stochastic subgradient method are given by
\begin{align}
    y_{n+1}
    &=
    y_n
    +
    \beta(n)
    \big(
    \tilde{g}(n)-y_n
    \big),
    \label{eq:fast_update}
    \\
    x_{n+1}
    &=
    \mathcal{P}_{\mathcal{X}}
    \big(
    x_n-\alpha(n)y_n
    \big).
    \label{eq:slow_update}
\end{align}

In the simulation setup, we choose the step-size sequences as
\begin{equation*}
    \beta(n)
    =
    \frac{100}{(n+1)^{0.6}},
    \qquad
    \alpha(n)
    =
    \frac{100}{(n+1)^{0.9}}.
\end{equation*}

In the following figures, we illustrate the behavior of the estimation error $\norm{x_n-x^\ast}$ as a function of the number of iterations for different choices of the smoothing parameter $\lambda$.

\begin{figure}[h!]
    \centering
    \includegraphics[width=0.8\linewidth]{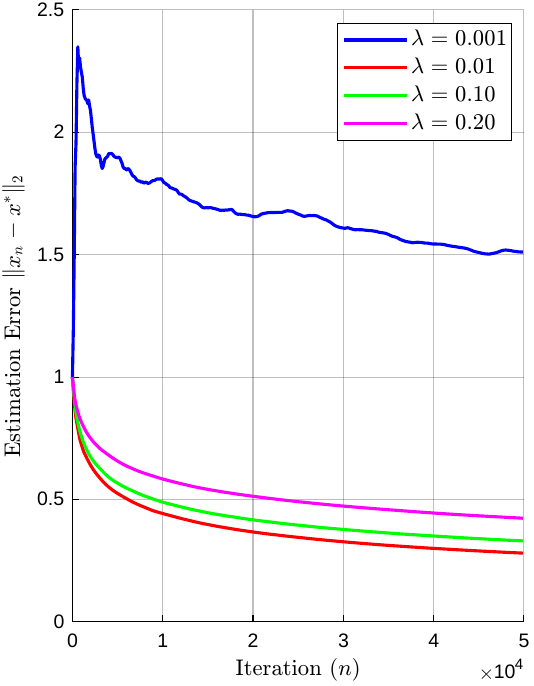}
    \caption{Performance of Two Time scale stochastic subgradient algorithm for different choices of $\lambda$.}
    \label{fig:placeholder}
\end{figure}

\begin{figure}[h!]
    \centering
    \includegraphics[width=0.8\linewidth]{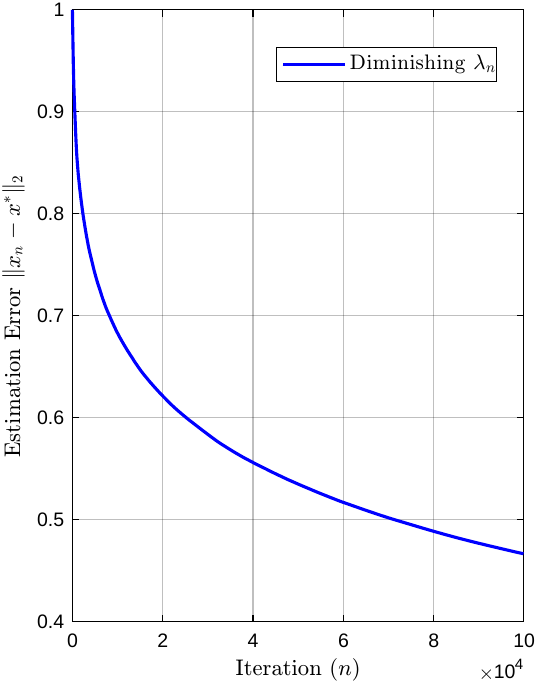}
    \caption{Performance of Two Time scale stochastic subgradient algorithm with diminishing $\lambda_n$.}
    \label{fig:placeholder1}
\end{figure}

From Figures~\ref{fig:placeholder} and \ref{fig:placeholder1}, we observe the performance of the two-timescale stochastic subgradient algorithm under both constant and diminishing smoothing parameters.

Figure~\ref{fig:placeholder} illustrates the behavior of the algorithm for different constant values of the smoothing parameter $\lambda$. We observe that reducing $\lambda$ generally decreases the estimation error $\|x_n-x^\ast\|$ as the iterations progress. However, $\lambda$ cannot be chosen arbitrarily small, since excessively small values may adversely affect the performance of the gradient estimator and the overall optimization process, as also evident from Figure~\ref{fig:placeholder}.

On the other hand, Figure~\ref{fig:placeholder1} considers a diminishing sequence $\lambda_n$ satisfying Assumption~\ref{Assumptionregardingsmoothing}. In this case, the optimization error gradually decreases as the smoothing bias vanishes over time, leading to improved asymptotic performance compared to the constant setting $\lambda$.

\section{Conclusion}
We studied stochastic optimization of non-smooth, non-convex objectives under constraints, given only noisy function evaluations. By combining Gaussian smoothing with a two time-scale stochastic approximation framework, we proposed a zeroth-order algorithm that ensures almost sure convergence without requiring explicit subgradient information. The results establish a rigorous foundation for zeroth-order methods in challenging optimization landscapes. Future work will include deriving finite-time guarantees and extending the approach to non-Euclidean geometries, particularly within the mirror descent framework.

\bibliographystyle{IEEEtran}
\bibliography{ref}

% Generated by IEEEtran.bst, version: 1.14 (2015/08/26)
\begin{thebibliography}{10}
\providecommand{\url}[1]{#1}
\csname url@samestyle\endcsname
\providecommand{\newblock}{\relax}
\providecommand{\bibinfo}[2]{#2}
\providecommand{\BIBentrySTDinterwordspacing}{\spaceskip=0pt\relax}
\providecommand{\BIBentryALTinterwordstretchfactor}{4}
\providecommand{\BIBentryALTinterwordspacing}{\spaceskip=\fontdimen2\font plus
\BIBentryALTinterwordstretchfactor\fontdimen3\font minus \fontdimen4\font\relax}
\providecommand{\BIBforeignlanguage}[2]{{%
\expandafter\ifx\csname l@#1\endcsname\relax
\typeout{** WARNING: IEEEtran.bst: No hyphenation pattern has been}%
\typeout{** loaded for the language `#1'. Using the pattern for}%
\typeout{** the default language instead.}%
\else
\language=\csname l@#1\endcsname
\fi
#2}}
\providecommand{\BIBdecl}{\relax}
\BIBdecl

\bibitem{ermoliev2003solution}
Y.~M. Ermoliev and V.~Norkin, ``Solution of nonconvex nonsmooth stochastic optimization problems,'' \emph{Cybernetics and Systems Analysis}, vol.~39, no.~5, pp. 701--715, 2003.

\bibitem{bolte2021conservative}
J.~Bolte and E.~Pauwels, ``Conservative set valued fields, automatic differentiation, stochastic gradient methods and deep learning,'' \emph{Mathematical Programming}, vol. 188, no.~1, pp. 19--51, 2021.

\bibitem{xu2019non}
Y.~Xu, R.~Jin, and T.~Yang, ``Non-asymptotic analysis of stochastic methods for non-smooth non-convex regularized problems,'' \emph{Advances in Neural Information Processing Systems}, vol.~32, 2019.

\bibitem{mai2020convergence}
V.~Mai and M.~Johansson, ``Convergence of a stochastic gradient method with momentum for non-smooth non-convex optimization,'' in \emph{International conference on machine learning}.\hskip 1em plus 0.5em minus 0.4em\relax PMLR, 2020, pp. 6630--6639.

\bibitem{lin2022gradient}
T.~Lin, Z.~Zheng, and M.~Jordan, ``Gradient-free methods for deterministic and stochastic nonsmooth nonconvex optimization,'' \emph{Advances in Neural Information Processing Systems}, vol.~35, pp. 26\,160--26\,175, 2022.

\bibitem{bianchi2022convergence}
P.~Bianchi, W.~Hachem, and S.~Schechtman, ``Convergence of constant step stochastic gradient descent for non-smooth non-convex functions,'' \emph{Set-Valued and Variational Analysis}, vol.~30, no.~3, pp. 1117--1147, 2022.

\bibitem{prashanth2025gradient}
L.~Prashanth and S.~Bhatnagar, ``Gradient-based algorithms for zeroth-order optimization,'' \emph{Foundations and Trends{\textregistered} in Optimization}, vol.~8, no. 1--3, pp. 1--332, 2025.

\bibitem{Nesterov2015RandomGM}
Y.~Nesterov and V.~G. Spokoiny, ``Random gradient-free minimization of convex functions,'' \emph{Foundations of Computational Mathematics}, vol.~17, pp. 527 -- 566, 2015.

\bibitem{spall1992}
J.~C. Spall, ``Multivariate stochastic approximation using a simultaneous perturbation gradient approximation,'' \emph{IEEE transactions on automatic control}, vol.~37, no.~3, pp. 332--341, 2002.

\bibitem{B2007}
S.~Bhatnagar, ``Adaptive newton-based multivariate smoothed functional algorithms for simulation optimization,'' \emph{ACM Transactions on Modeling and Computer Simulation (TOMACS)}, vol.~18, no.~1, pp. 1--35, 2007.

\bibitem{SPP2013}
S.~Bhatnagar, H.~Prasad, and L.~Prashanth, \emph{Stochastic Recursive Algorithms for Optimization: Simultaneous Perturbation Methods}.\hskip 1em plus 0.5em minus 0.4em\relax Springer, 2013.

\bibitem{ghadimi}
S.~Ghadimi and G.~Lan, ``Stochastic first- and zeroth-order methods for nonconvex stochastic programming,'' \emph{SIAM Journal on Optimization}, vol.~23, no.~4, pp. 2341--2368, 2013.

\bibitem{Ramaswami}
A.~Ramaswamy and S.~Bhatnagar, ``Analysis of gradient descent methods with nondiminishing bounded errors,'' \emph{IEEE Transactions on Automatic Control}, vol.~63, no.~5, pp. 1465--1471, 2018.

\bibitem{chen2023faster}
L.~Chen, J.~Xu, and L.~Luo, ``Faster gradient-free algorithms for nonsmooth nonconvex stochastic optimization,'' in \emph{International Conference on Machine Learning}.\hskip 1em plus 0.5em minus 0.4em\relax PMLR, 2023, pp. 5219--5233.

\bibitem{kornowski2024algorithm}
G.~Kornowski and O.~Shamir, ``An algorithm with optimal dimension-dependence for zero-order nonsmooth nonconvex stochastic optimization,'' \emph{Journal of Machine Learning Research}, vol.~25, no. 122, pp. 1--14, 2024.

\bibitem{liu2024zeroth}
Z.~Liu, C.~Chen, L.~Luo, and B.~K.~H. Low, ``Zeroth-order methods for constrained nonconvex nonsmooth stochastic optimization,'' in \emph{Forty-first International Conference on Machine Learning}, 2024.

\bibitem{rockafellar1998variational}
R.~T. Rockafellar and R.~J. Wets, \emph{Variational analysis}.\hskip 1em plus 0.5em minus 0.4em\relax Springer, 1998.

\bibitem{bianchi2003solution}
Y.~M. Ermoliev and V.~Norkin, ``Solution of nonconvex nonsmooth stochastic optimization problems,'' \emph{Cybernetics and Systems Analysis}, vol.~39, no.~5, pp. 701--715, 2003.

\bibitem{clarke1990optimization}
F.~H. Clarke, \emph{Optimization and nonsmooth analysis}.\hskip 1em plus 0.5em minus 0.4em\relax SIAM, 1990.

\bibitem{davis2019stochastic}
D.~Davis and D.~Drusvyatskiy, ``Stochastic model-based minimization of weakly convex functions,'' \emph{SIAM Journal on Optimization}, vol.~29, no.~1, pp. 207--239, 2019.

\bibitem{zhou2017landscape}
P.~Zhou and J.~Feng, ``The landscape of deep learning algorithms,'' \emph{arXiv preprint arXiv:1705.07038}, 2017.

\bibitem{durrett2019probability}
R.~Durrett, \emph{Probability: theory and examples}.\hskip 1em plus 0.5em minus 0.4em\relax Cambridge university press, 2019, vol.~49.

\bibitem{yaji2020stochastic}
V.~G. Yaji and S.~Bhatnagar, ``Stochastic recursive inclusions in two timescales with nonadditive iterate-dependent markov noise,'' \emph{Mathematics of Operations Research}, vol.~45, no.~4, pp. 1405--1444, 2020.

\bibitem{hiriart2013convex}
J.~Hiriart-Urruty and C.~Lemarechal, \emph{Convex Analysis and Minimization Algorithms I: Fundamentals}, ser. Grundlehren der mathematischen Wissenschaften.\hskip 1em plus 0.5em minus 0.4em\relax Springer Berlin Heidelberg, 2013.

\bibitem{doi:10.1137/S0363012904439301}
M.~Bena\"{v}m, J.~Hofbauer, and S.~Sorin, ``Stochastic approximations and differential inclusions,'' \emph{SIAM Journal on Control and Optimization}, vol.~44, no.~1, pp. 328--348, 2005.

\bibitem{borkar2008stochastic}
V.~Borkar, \emph{Stochastic Approximation: A Dynamical Systems Viewpoint}.\hskip 1em plus 0.5em minus 0.4em\relax Springer, 2023.

\bibitem{Cortes}
J.~Cortes, ``Discontinuous dynamical systems: A tutorial on solutions, nonsmooth analysis, and stability,'' \emph{IEEE Control Systems Magazine}, vol.~28, no.~3, pp. 36--73, 2008.

\bibitem{goebel2024set}
R.~K. Goebel, \emph{Set-Valued, Convex, and Nonsmooth Analysis in Dynamics and Control: An Introduction}.\hskip 1em plus 0.5em minus 0.4em\relax SIAM, 2024.

\bibitem{Doffler}
A.~Hauswirth, S.~Bolognani, and F.~D\"{o}rfler, ``Projected dynamical systems on irregular, non-{E}uclidean domains for nonlinear optimization,'' \emph{SIAM Journal on Control and Optimization}, vol.~59, no.~1, pp. 635--668, 2021.

\bibitem{pemantle1990nonconvergence}
R.~Pemantle, ``Nonconvergence to unstable points in urn models and stochastic approximations,'' \emph{The Annals of Probability}, pp. 698--712, 1990.

\bibitem{haykin2009neural}
S.~Haykin, \emph{Neural networks and learning machines, 3/E}.\hskip 1em plus 0.5em minus 0.4em\relax Pearson education india, 2009.

\bibitem{zhang2020complexity}
J.~Zhang, H.~Lin, S.~Jegelka, S.~Sra, and A.~Jadbabaie, ``Complexity of finding stationary points of nonconvex nonsmooth functions,'' in \emph{International Conference on Machine Learning}.\hskip 1em plus 0.5em minus 0.4em\relax PMLR, 2020, pp. 11\,173--11\,182.

\end{thebibliography}
\end{document}